\documentclass[11pt]{article}
\usepackage[a4paper,margin=1in]{geometry}
\usepackage{amsmath,amssymb,amsthm,mathtools}
\usepackage{graphicx}
\usepackage{microtype}
\usepackage{enumitem}
\usepackage[hidelinks]{hyperref}

\newtheorem{theorem}{Theorem}[section]
\newtheorem{proposition}[theorem]{Proposition}
\newtheorem{lemma}[theorem]{Lemma}
\newtheorem{corollary}[theorem]{Corollary}
\newtheorem{remark}[theorem]{Remark}
\newcommand{\T}{\mathbb T}
\newcommand{\D}{\mathbb D}

\title{Radial convergence and divergence along monotone approaches in the infinite-dimensional polydisc}
\author{Jiawei Sun, Yufeng Lu  and Chao Zu\thanks{Corresponding author.}}
\date{}

\begin{document}
\maketitle

\begin{abstract}

We answer two questions of Aleman, Olsen, and Saksman concerning radial convergence for bounded analytic functions on the infinite-dimensional polydisc. First, we construct a boundary-point-dependent counterexample in $H^\infty(\mathbb{T}^\infty)$ for which every radius vector has non-increasing components and every fixed coordinate increases monotonically to $1$, yet the radial values converge almost everywhere to $0$ instead of the nonzero boundary value. We then construct a boundary-point-independent sequence of radius vectors with non-increasing components for which convergence to the boundary function still fails almost everywhere, thereby answering their second question as well. In contrast, if a boundary-point-independent approach is additionally componentwise increasing along the sequence, then the associated product Poisson operators satisfy an $L^p$ maximal inequality and converge almost everywhere and in $L^p$ for $1<p<\infty$. Consequently, the corresponding radial convergence holds for $H^p(\mathbb{T}^\infty)$ on the natural domains of the holomorphic extensions.

\end{abstract}

\section{Introduction}

Fatou's classical theorem asserts that bounded analytic functions on the unit disc have radial limits almost everywhere; see \cite{Fatou}. In finitely many variables, boundary convergence for product Poisson integrals is closely related to the strong differentiation theorem of Jessen, Marcinkiewicz, and Zygmund \cite{JMZ}. The infinite-dimensional setting is qualitatively different: convergence of each fixed coordinate to the boundary does not by itself control the infinitely many remaining coordinates, and the geometry of the radial approach becomes an essential part of the problem.

Aleman, Olsen, and Saksman developed a boundary theory for Hardy spaces on the infinite-dimensional polydisc \cite{AOS}. Among their results, they obtained positive convergence theorems for several restricted radial approaches and also showed that unrestricted convergence may fail dramatically. In particular, their Theorem 4 produces a zero-free function in  $H^\infty(\T^\infty)$ for which the correct boundary value is not recovered almost everywhere along suitable radial approaches. Their construction includes both a boundary-point-dependent approach, for which every fixed coordinate may be chosen to increase to $1$, and a boundary-point-independent one.

They subsequently asked whether the counterexample can be made compatible with an additional ordering of the coordinates. More precisely, Question 2 in \cite{AOS} asks whether radial convergence can still fail if every radius vector satisfies
\[
 r^{(s)}_1\ge r^{(s)}_2\ge\cdots,
\]
and Question 3 asks whether such a counterexample may in addition be chosen independently of the boundary point.

It is useful to distinguish this ordering of the coordinates of a fixed radius vector from the different requirement
\[
 r_n^{(s)}\le r_n^{(s+1)}\qquad(n,s\ge1),
\]
which imposes monotonicity of each fixed coordinate along the approach. The latter condition is not part of Questions 2 and 3, but it turns out to determine the positive side of the problem. We call the latter condition \emph{componentwise increasing} in $s$, and call an approach \emph{boundary-point independent} when the sequence of radius vectors is fixed independently of $e^{i\theta}\in\T^\infty$.

We answer both questions of Aleman, Olsen, and Saksman affirmatively. For Question 2, we construct a function \(f\in H^\infty(\mathbb T^\infty)\), with nonzero boundary values almost everywhere, and a boundary-point-dependent radial approach such that every radius vector has non-increasing components and every fixed coordinate increases monotonically to $1$, while
\[
f\bigl(r^{(s)}(\theta)e^{i\theta}\bigr)\longrightarrow0
\]
for almost every \(e^{i\theta}\in\mathbb T^\infty\), see Section~2. Thus the failure persists even under the additional monotonicity of each fixed coordinate along the sequence.

For Question 3, we discretize the radii occurring in this construction. The resulting sequence is independent of the boundary point, every radius vector again has non-increasing components, and every fixed coordinate tends to $1$. Nevertheless, for almost every boundary point the corresponding values of $f$ have a subsequence converging to $0$, and hence fail to converge to the nonzero boundary value, see Section 3.

The latter result shows that boundary-point independence alone does not restore Fatou convergence. This naturally raises the question of what happens if one also requires each fixed coordinate to increase monotonically along the approach. Under this additional hypothesis, the situation changes. We prove that if
\[
0\le r^{(1)}_n\le r^{(2)}_n\le\cdots\uparrow1
\qquad(n\ge1),
\]
and the sequence is independent of the boundary point, then for every $1<p<\infty$,
\[
 \left\|\sup_{s\ge1}|P_{r^{(s)}}g|\right\|_p
 \le \frac{p}{p-1}\|g\|_p,
 \qquad g\in L^p(\T^\infty).
\]
Consequently \(P_{r^{(s)}}g\to g\) almost everywhere and in \(L^p(\mathbb T^\infty)\), see Theorem \ref{thm:coordinatewise-monotone}. Applied to Hardy functions, this yields almost-everywhere radial convergence on the natural domains of their holomorphic extensions.

The counterexamples are obtained by adapting the block construction in Theorem 4 of \cite{AOS}. The main new ingredient is an explicit optimization of one-variable Poisson kernels. The maximizing radii yield a logarithmic gain while the contribution at the origin remains summable; suitable powers are then chosen so that the admissible radii are ordered from one coordinate to the next. A finite discretization of these radii gives the boundary-point-independent construction.

For the positive result, we use a convolution form of the Rota--Doob martingale argument. We establish a maximal theorem for a factorized family of convolution measures on a compact abelian group and apply it to the product Poisson measures on \(\mathbb T^\infty\). This gives both the maximal inequality and the almost-everywhere convergence stated above.


For convenience, we adopt the following notation throughout the paper. Let
\[
 \T^\infty=\prod_{n=1}^\infty\T,
 \qquad
 \D^\infty=\prod_{n=1}^\infty\D,
 \qquad
 m_\infty=\bigotimes_{n=1}^\infty m,
\]
where $m$ is normalized Haar measure on $\T$. We identify $\T$ with $\mathbb R/(2\pi\mathbb Z)$ and write $dm(t)=dt/(2\pi)$. Set $c_0=\{z=(z_n)_{n\ge1}\in\mathbb C^\infty:z_n\to0\}$ and $\ell^2=\{z=(z_n)_{n\ge1}:\sum_n|z_n|^2<\infty\}$. For $\theta=(\theta_n)$ and $r=(r_n)\in[0,1)^\infty$, we write $e^{i\theta}=(e^{i\theta_n})$ and $re^{i\theta}=(r_ne^{i\theta_n})$. For $1\le p\le\infty$, $H^p(\T^\infty)$ denotes the subspace of $L^p(\T^\infty)$ whose Fourier spectrum is contained in the finitely supported nonnegative multi-indices. For $1\le p<\infty$, the canonical holomorphic extension is defined on $\ell^2\cap\D^\infty$, whereas for $p=\infty$ the natural bounded holomorphic extension is defined on $c_0\cap\D^\infty$; see \cite[Chapter 5 \& Chapter 13]{DefantBook}. We use the same symbol for the boundary function and its holomorphic extension, and for $f\in H^\infty(\T^\infty)$ write $f^*$ for its boundary representative. The symbols $\|\cdot\|_p$, $\mathbb E$, and $\operatorname{Var}$ have their usual meanings with respect to $m_\infty$. The letters $c,C>0$ denote absolute constants whose values may change from line to line; $A\asymp B$ means $cB\le A\le CB$, and $A=O(B)$ means $|A|\le CB$.

\section{Question 2: a point-dependent monotone counterexample}

The following theorem is the main result of this section.

\begin{theorem}\label{thm:q2}
There exists $f\in H^\infty(\T^\infty)$ with $f^*\ne0$ almost everywhere and a sequence of Borel measurable maps $r^{(s)}:\T^\infty\to c_0\cap[0,1)^\infty$, $s\ge1$, such that, for $m_\infty$-almost every $e^{i\theta}\in\T^\infty$, the vectors $r^{(s)}(\theta)=\bigl(r_n^{(s)}(\theta)\bigr)_{n\ge1}$ have the following properties:
\begin{enumerate}[label=\rm(\arabic*)]
\item $r^{(s)}_1(\theta)\ge r^{(s)}_2(\theta)\ge\cdots$ for every $s$;
\item $r_n^{(s)}(\theta)\le r_n^{(s+1)}(\theta)$ for all $n,s$, and $r_n^{(s)}(\theta)\to1$ as $s\to\infty$ for every $n$;
\item $f\bigl(r^{(s)}(\theta)e^{i\theta}\bigr)\to0$ as $s\to\infty$.
\end{enumerate}
\end{theorem}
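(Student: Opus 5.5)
We propose to adapt the block construction behind Theorem 4 of \cite{AOS}. Take $f=\exp(-F)$ with
\[
F(z)=\sum_{k\ge1}\varepsilon_k\sum_{n\in B_k}\frac{1+z_n}{1-z_n}\cdot\frac{1}{|B_k|}\cdot(\text{normalisation}),
\]
or, more concretely, $f(z)=\prod_k \exp\bigl(-\varepsilon_k N_k^{-1}\sum_{n\in B_k} h(z_n)\bigr)$. Here the $B_k$ are disjoint consecutive blocks of coordinates of size $N_k$, and $h$ is a bounded-real-part analytic function on $\D$, for instance $h(z)=\log\frac{e}{1-z}$ or the Herglotz kernel truncated, chosen so that $\operatorname{Re}h\ge0$, $h(0)$ is small and $\int_\T \operatorname{Re}h\,dm=\operatorname{Re}h(0)$. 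Then $|f|\le1$, so $f\in H^\infty(\T^\infty)$. We need $\sum_k\varepsilon_k\operatorname{Re}h(0)<\infty$ together with a variance bound of order $\sum_k\varepsilon_k^2/N_k<\infty$. With these, the boundary exponent converges almost everywhere by Kolmogorov's theorem, so $f^*\neq0$ almost everywhere. Working at the level of the real part $u=\operatorname{Re}F$, which is a sum of one-variable Poisson integrals $P_{r_n}\phi(\theta_n)$, it suffices to make $u(r^{(s)}(\theta)e^{i\theta})\to+\infty$. This forces $|f|\to0$.

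The key one-variable step is to optimise the Poisson kernel. For a boundary angle $\theta_n$, choose the radius $\rho(\theta_n)$ maximising $P_\rho\phi(\theta_n)$, for example $\rho=1-|\theta_n|$ when $\phi$ is a logarithmic singularity at $1$. The gain $\sup_\rho P_\rho\phi(\theta_n)\gtrsim\log(1/|\theta_n|)$ has mean $\asymp$ const, and in fact has a heavy enough tail that the block average $N_k^{-1}\sum_{n\in B_k}\log(1/|\theta_n|)$ typically captures the largest term, of size $\approx N_k^{-1}\log N_k$. A cleaner option is to use the kernel $\phi_k$ concentrated on arcs of length $\sim 1/N_k$. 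In that case some coordinate in $B_k$ lands in the arc with probability $1-(1-1/N_k)^{N_k}$, and the optimised radius picks up a contribution of size $\asymp\log N_k$ or a fixed multiple of the weight. We then choose the weights $\varepsilon_k$ so that $\varepsilon_k\cdot(\text{gain})\to\infty$ while $\sum\varepsilon_k\operatorname{Re}h_k(0)<\infty$. Borel--Cantelli, using independence across blocks, gives that for a.e.\ $\theta$ all large $k$ produce a large gain.

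Next we build the approach. At stage $s$, use block $k=k(s)$ and put the optimal radii $\rho_n(\theta)$ on $n\in B_k$, radius $\approx1-\delta_s$ on earlier coordinates, and small radii on later coordinates. The other blocks then contribute $\ge0$ minus something small, since $\operatorname{Re}h\ge0$ up to a harmonic correction near the origin that is summable. Making the radii monotone in both directions is the delicate point. Property (1) requires non-increasing radii across coordinates, so inside $B_k$ we cannot use arbitrary $\rho_n(\theta)$. The remedy is to replace them by suitable powers, i.e.\ use $r_n=\rho^{a_n}$ with exponents chosen so the admissible windows are ordered. Concretely, we would only need one ``winning'' coordinate per block at its optimal radius. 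All coordinates before it get at least that radius, which only helps because of positivity, and all later ones get the tail values of a non-increasing sequence tending to $0$, so that $r^{(s)}\in c_0$. For property (2), each fixed coordinate is increased step by step, taking maxima with previous stages (the running maximum $\max_{s'\le s}$), and $r_n^{(s)}\to1$ is ensured by sending $\delta_s\to0$. Measurability is clear since everything is built from measurable selections of maximisers.

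The main obstacle is reconciling (1) and (2) with positivity of the gain. Taking running maxima may raise coordinates beyond their optimal radius and destroy the gain, and raising earlier coordinates must not lower $u$. I would handle this by choosing the boundary data so that each one-variable $P_r\phi_k$ is nonnegative, using positive kernels minus small constants absorbed in the summable term. Then raising any coordinate changes $u$ only by a controlled amount. If necessary, I would also thin to a subsequence of blocks whose optimal radii are already increasing in $s$.
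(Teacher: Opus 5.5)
You have the right ingredients on your list (blocks, optimising a one-variable Poisson kernel, powers to order the radius windows, radii near $1$ on earlier blocks, Borel--Cantelli). But the mechanism that forces $|f|\to0$ is never established, and the concrete scheme you settle on cannot work. The obstruction is the weak-type $(1,1)$ bound for the radial maximal function: if $u\ge0$ is harmonic on $\D$, then $m\{t:\sup_{\rho<1}u(\rho e^{it})>\lambda\}\le Cu(0)/\lambda$. Let $\sigma_k$ be the sum of $u_n(0)$ over the $k$th block; it must be summable, or else $f(0)=0$. Then the probability that \emph{some single} coordinate of that block contributes more than $\lambda$ is at most $C\sigma_k/\lambda\to0$. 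Hence ``one winning coordinate per block at its optimal radius'' cannot supply the divergence. Neither can the coordinate that lands in an arc of length $1/N_k$, an event whose probability only tends to $1-1/e$ anyway. A $\log N_k$ can only come from summing the optimised values over the whole block. This is what the paper does. For $u_{k,j}=\eta_kP_{1-\eta_k}(z^{q_{k,j}})$ the optimised value is $\eta_k/\sin\varphi$ on $2\eta_k<\varphi<1/4$, with mean $\asymp\eta_k\log(1/\eta_k)$ against $u_{k,j}(0)=\eta_k$. So the ratio of mean optimised value to value at the origin is $\asymp\log(1/\eta_k)\to\infty$; for your $h=\log\frac{e}{1-z}$ this ratio is bounded. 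All $L_k$ coordinates collect this excess simultaneously. The bound $\operatorname{Var}B_k\le CL_k\eta_k$, Chebyshev, and Borel--Cantelli with $\eta_k=e^{-(k+2)^3}$ and $L_k\eta_k\asymp k^{-2}$ then give $B_k\to\infty$ almost everywhere.

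Every coordinate of the block must sit at its own $\theta$-dependent maximising radius. Property (1) can therefore only be reconciled with the gain if the ordering acts on the \emph{factors}, not merely on the radii. The paper composes the $j$th factor with $z^{q_{k,j}}$, which moves its maximising radius into $[\alpha_k^{1/q_{k,j}},\beta_k^{1/q_{k,j}}]$. It chooses $q_{k,j}>(\log\alpha_k/\log\beta_k)\,q_{k,j+1}$, so these windows are disjoint and decrease in $j$; then $r_{k,j}(t)>r_{k,j+1}(t')$ for all angles $t,t'$. Replacing a radius $\rho$ by $\rho^{a_n}$ without changing the factor moves it off the maximiser and loses the gain.

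Your treatment of (2) is also unsound. Positivity only guarantees that a raised coordinate still contributes $\ge0$, not that its optimised value survives. So ``raising any coordinate changes $u$ only by a controlled amount'' is false, and running maxima may destroy the gain. None is needed: put $0$ on all later blocks and $1-\varepsilon_s>\max_{k\le s}\beta_k^{1/q_{k,1}}$ on all earlier ones. Then each coordinate of block $k$ runs through $0,\dots,0,r_{k,j}(\theta_{k,j}),1-\varepsilon_{k+1},1-\varepsilon_{k+2},\dots$. The bound $|f(r^{(s)}(\theta)e^{i\theta})|\le\exp\bigl(-B_s(r_{(s)}(\theta)e^{i\theta_{(s)}})\bigr)$ follows from the nonnegativity of every $u_{k,j}$.
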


We adapt the counterexample constructed in Theorem~4 of \cite{AOS}. We divide the variables into finite blocks and construct analytic functions $U_{k,j}$ with nonnegative real parts $u_{k,j}$. For the $k$th block, write $z_{(k)}=(z_{k,1},\ldots,z_{k,L_k})$ and define the corresponding block function by $B_k\bigl(z_{(k)}\bigr):=\sum_{j=1}^{L_k}u_{k,j}(z_{k,j})$.
The counterexample function is the infinite product
\[
 f(z)=\prod_{k=1}^{\infty}\prod_{j=1}^{L_k}
 \exp\bigl(-U_{k,j}(z_{k,j})\bigr),
 \qquad z\in c_0\cap\D^\infty.
\]
We choose the parameters so that the contributions at the origin are summable, while the selected block radii satisfy $B_k\bigl(r_{(k)}(\theta)e^{i\theta_{(k)}}\bigr)\to\infty ~(k\to \infty)$ for almost every boundary point. The first property ensures that the product is nonzero, whereas the second forces its modulus to tend to zero along the resulting radial approach.

\subsection{Construction of \texorpdfstring{$u_{k,j}$}{u(k,j)} and the associated radii}

For each $k\ge1$, choose a positive integer $L_k$ and a number $0<\eta_k<1/16$. The integer $L_k$ is the length of the $k$th block, while $\eta_k$ is the scale parameter used for every factor in that block. Partition the coordinates into consecutive blocks
\[
 \{1,\ldots,L_1\},\quad
 \{L_1+1,\ldots,L_1+L_2\},\quad
 \ldots .
\]
For $1\le j\le L_k$, write $z_{k,j}$ and $\theta_{k,j}$ for the $j$th coordinate and angle in the $k$th block. For $0<a<1$ define
\[
 P_a(z)=\Re\frac{1+az}{1-az}
 =\frac{1-a^2|z|^2}{|1-az|^2}.
\]
For integers $q_{k,j}\ge1$, set $U_{k,j}(z)=\eta_k\frac{1+(1-\eta_k)z^{q_{k,j}}}{1-(1-\eta_k)z^{q_{k,j}}}$ and $u_{k,j}=\Re U_{k,j}=\eta_kP_{1-\eta_k}(z^{q_{k,j}})$. Then $u_{k,j}$ is nonnegative and harmonic and $u_{k,j}(0)=\eta_k$. For $z_{(k)}=(z_{k,1},\ldots,z_{k,L_k})$, define $B_k\bigl(z_{(k)}\bigr):=\sum_{j=1}^{L_k}u_{k,j}(z_{k,j})$. We shall choose the parameters so that $\sum_{k=1}^\infty L_k\eta_k<\infty$. Under this condition, define $f_{k,j}(w)=\exp\bigl(-U_{k,j}(w)\bigr)$ and $F_N(z)=\prod_{k=1}^{N}\prod_{j=1}^{L_k}f_{k,j}(z_{k,j})$. Since $\Re U_{k,j}\ge0$, one has $\|F_N\|_\infty\le1$. Fix $0<\rho<1$. If $z\in c_0$ and $\|z\|_\infty\le\rho$, then, since $q_{k,j}\ge1$, $|U_{k,j}(z_{k,j})|\le\eta_k(1+\rho)/(1-\rho)$.
Hence
\[
 \sum_{k=1}^{\infty}\sum_{j=1}^{L_k}
 \sup_{\|z\|_\infty\le\rho}|U_{k,j}(z_{k,j})|
 \le
 \frac{1+\rho}{1-\rho}
 \sum_{k=1}^{\infty}L_k\eta_k
 <\infty.
\]
Therefore the series $\sum_{k=1}^{\infty}\sum_{j=1}^{L_k}U_{k,j}(z_{k,j})$ converges uniformly on every smaller closed ball of $c_0$, and hence compact-uniformly on $c_0\cap\D^\infty$. It follows that
\[
 f(z):=\prod_{k=1}^{\infty}\prod_{j=1}^{L_k}f_{k,j}(z_{k,j})
 =\exp\!\left(-\sum_{k=1}^{\infty}\sum_{j=1}^{L_k}U_{k,j}(z_{k,j})\right)
\]
is a bounded holomorphic function on $c_0\cap\D^\infty$, and $F_N\to f$ compact-uniformly there. Moreover, $|f(z)|\le1$ and $f(0)=\exp\!\left(-\sum_{k=1}^{\infty}L_k\eta_k\right)>0$. By the standard isometric identification between bounded holomorphic functions on $c_0\cap\D^\infty$ and $H^\infty(\T^\infty)$ (see \cite[Chapter 5]{DefantBook}), we regard $f$ as an element of $H^\infty(\T^\infty)$. By Corollary~2 of \cite{AOS},
\begin{equation}\label{eq:boundary-nonzero}
 f^*(e^{i\theta})\ne0
 \quad\text{for }m_\infty\text{-almost every }e^{i\theta}\in\T^\infty.
\end{equation}
Taking real parts in the absolutely convergent series gives, for every $z\in c_0\cap\D^\infty$,
\begin{equation}\label{eq:modulus}
 |f(z)|
 =\exp\!\left(-\sum_{k=1}^{\infty}B_k\bigl(z_{(k)}\bigr)\right).
\end{equation}
The remaining restrictions on $\eta_k$ and $L_k$ will be imposed by the block estimates.

\begin{proposition}\label{prop:max}
For $0<\varphi<1/4$, the function $a\mapsto P_a(e^{i\varphi})$ attains its maximum at $a^*(\varphi)=\cos\varphi/(1+\sin\varphi)$, and $P_{a^*(\varphi)}(e^{i\varphi})=1/\sin\varphi$.
\end{proposition}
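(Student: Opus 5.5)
We compute $P_a(e^{i\varphi})$ explicitly on $a\in(0,1)$ and maximize it as a one-variable function. For $|z|=1$, $z=e^{i\varphi}$,
\[
 P_a(e^{i\varphi})=\frac{1-a^2}{1-2a\cos\varphi+a^2}=:g(a).
\]
Differentiating, the numerator of $g'(a)$ is
\[
-2a(1-2a\cos\varphi+a^2)-(1-a^2)(2a\cos\varphi\cdot\tfrac{1}{a}\cdot a^{0}\cdot(-1)+2a)
\]
which we simplify directly:
\[
 \frac{d}{da}\frac{1-a^2}{1-2a c+a^2}
 =\frac{-2a(1-2ac+a^2)-(1-a^2)(2a-2c)}{(1-2ac+a^2)^2}
 =\frac{2\bigl(c a^2-2a+c\bigr)}{(1-2ac+a^2)^2},
\]
where $c=\cos\varphi$. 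So critical points solve $ca^2-2a+c=0$, i.e. $a=(1\pm\sin\varphi)/\cos\varphi$. The root with the minus sign equals $(1-\sin\varphi)/\cos\varphi=\cos\varphi/(1+\sin\varphi)$, using $(1-\sin\varphi)(1+\sin\varphi)=\cos^2\varphi$. This is $a^*(\varphi)$. Since $0<\varphi<1/4$, we have $0<a^*<1$, while the other root exceeds $1$.

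Next we check the sign. The quadratic $ca^2-2a+c$ equals $c>0$ at $a=0$ and is negative between its roots. Hence $g'>0$ on $(0,a^*)$ and $g'<0$ on $(a^*,1)$. Therefore $a^*$ is the global maximizer on $(0,1)$. Note also that $g(a)\to0$ as $a\to1^-$, since $\varphi\neq0$.

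Finally we evaluate $g(a^*)$. Write $s=\sin\varphi$, so $a^*=c/(1+s)$. Then
\[
1-a^{*2}=\frac{(1+s)^2-c^2}{(1+s)^2}=\frac{2s(1+s)}{(1+s)^2}.
\]
Similarly, using $c^2=(1-s)(1+s)$,
\[
1-2a^*c+a^{*2}=\frac{(1+s)^2-2c^2(1+s)+c^2}{(1+s)^2}=\frac{(1+s)\bigl(1+s-2(1-s)(1+s)+(1-s)\bigr)}{(1+s)^2}.
\]
The bracket simplifies to $2-2(1-s^2)=2s^2$. Taking the ratio gives $g(a^*)=2s(1+s)/\bigl(2s^2(1+s)\bigr)=1/s$.

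The only step needing care is this algebraic simplification, where the key identity throughout is $c^2=1-s^2$. The restriction $\varphi<1/4$ is used only to guarantee $\sin\varphi>0$ and $\cos\varphi>0$, so that $a^*\in(0,1)$; any $0<\varphi<\pi/2$ would do.
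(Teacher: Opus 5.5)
Your proof is correct and follows the same route as the paper. You differentiate to get the numerator $2\bigl((1+a^2)\cos\varphi-2a\bigr)$, select the root $a^*=(1-\sin\varphi)/\cos\varphi=\cos\varphi/(1+\sin\varphi)$ in $(0,1)$, check the sign change, and substitute; your sign analysis of the quadratic and the explicit evaluation giving $1/\sin\varphi$ fill in details the paper leaves to the reader. The garbled first display of the derivative numerator should be deleted, since the display after it gives the correct computation.
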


\begin{proof}
Differentiation gives
\[
 \frac{\partial}{\partial a}P_a(e^{i\varphi})
 =\frac{2((1+a^2)\cos\varphi-2a)}{(1-2a\cos\varphi+a^2)^2}.
\]
The critical-point equation $(1+a^2)\cos\varphi=2a$ has the roots $a=(1\pm\sin\varphi)/\cos\varphi$.
Only $a^*(\varphi)=\cos\varphi/(1+\sin\varphi)$ belongs to $(0,1)$, and the derivative changes sign from positive to negative there. Substitution gives the stated value.
\end{proof}

Fix $k$. With $\operatorname{dist}(x,2\pi\mathbb Z)$ denoting the Euclidean distance from $x\in\mathbb R$ to $2\pi\mathbb Z$, put $\varphi_{k,j}(t)=\operatorname{dist}(q_{k,j}t,2\pi\mathbb Z)$. For fixed $t$, the Poisson-kernel parameter in $u_{k,j}(re^{it})$ is $a=(1-\eta_k)r^{q_{k,j}}$. Hence, whenever $2\eta_k<\varphi_{k,j}(t)<1/4$, Proposition~\ref{prop:max} shows that $u_{k,j}(re^{it})$ is maximized at the radius
\begin{equation}\label{eq:maximizing-radius}
 r=
 \left(
 \frac{a^*(\varphi_{k,j}(t))}{1-\eta_k}
 \right)^{1/q_{k,j}}.
\end{equation}
We choose the integers $q_{k,j}$ so that these maximizing radii are ordered from one coordinate to the next.

Set $\alpha_k=a^*(1/4)/(1-\eta_k)$ and $\beta_k=a^*(2\eta_k)/(1-\eta_k)$. The function $\varphi\mapsto a^*(\varphi)/(1-\eta_k)$ is strictly decreasing on $(0,\pi/2)$. Since $0<2\eta_k<1/4$ and $\eta_k<1/16$, one has $0<\alpha_k<\beta_k<1$, and every maximizing radius in \eqref{eq:maximizing-radius} belongs to $[\alpha_k^{1/q_{k,j}},\beta_k^{1/q_{k,j}}]$. Thus it is enough to arrange that these intervals are strictly ordered, namely $\alpha_k^{1/q_{k,j}}>\beta_k^{1/q_{k,j+1}}$ for $1\le j<L_k$. Since $\log\alpha_k<\log\beta_k<0$, this is equivalent to $q_{k,j}>(\log\alpha_k/\log\beta_k)q_{k,j+1}$. We therefore choose $q_{k,L_k}=1$ and then choose $q_{k,j}$ recursively backwards so that this inequality holds. Such a choice is possible because each step imposes only a finite lower bound.

Finally, define
\begin{equation}\label{eq:rkj}
 r_{k,j}(t)=
 \begin{cases}
 \displaystyle
 \left(
 \frac{a^*(\varphi_{k,j}(t))}{1-\eta_k}
 \right)^{1/q_{k,j}},
 &2\eta_k<\varphi_{k,j}(t)<1/4,\\[3mm]
 (\alpha_k\beta_k)^{1/(2q_{k,j})},
 &\text{otherwise}.
 \end{cases}
\end{equation}
In either case, $\alpha_k^{1/q_{k,j}}\le r_{k,j}(t)\le\beta_k^{1/q_{k,j}}$. Together with the preceding separation condition, this gives
\begin{equation}\label{eq:block-monotone}
 r_{k,j}(t)>r_{k,j+1}(s)
 \qquad(t,s\in\T,\ 1\le j<L_k).
\end{equation}
On the set $2\eta_k<\varphi_{k,j}(t)<1/4$, Proposition~\ref{prop:max} gives
\begin{equation}\label{eq:peak-value}
 u_{k,j}\!\left(r_{k,j}(t)e^{it}\right)
 =\frac{\eta_k}{\sin\varphi_{k,j}(t)}.
\end{equation}
Outside this set,
\[
 (1-\eta_k)r_{k,j}(t)^{q_{k,j}}
 =\sqrt{a^*(1/4)a^*(2\eta_k)}
 \le \sqrt{a^*(1/4)}<1.
\]
Thus the parameter of the Poisson kernel remains in the fixed interval $[0,\sqrt{a^*(1/4)}]$. Since $P_a(e^{i\varphi})\le(1+a)/(1-a)$ for $0\le a<1$, the corresponding Poisson kernel is bounded by an absolute constant. Therefore, $u_{k,j}\!\left(r_{k,j}(t)e^{it}\right)\le C\eta_k$.

Geometric illustrations of the maximizing radii and the piecewise-defined radii appear in Appendix~\ref{app:figures}.

\subsection{Block estimates and parameter choice}

For a real-valued $X\in L^2(\T^\infty,m_\infty)$, define $\mathbb E X:=\int_{\T^\infty}X\,dm_\infty$ and $\operatorname{Var}(X):=\mathbb E[(X-\mathbb E X)^2]=\mathbb E(X^2)-(\mathbb E X)^2$. For the selected radii, write $r_{(k)}(\theta)e^{i\theta_{(k)}}:=(r_{k,j}(\theta_{k,j})e^{i\theta_{k,j}})_{j=1}^{L_k}$. Then
\[
 B_k\bigl(r_{(k)}(\theta)e^{i\theta_{(k)}}\bigr)
 =\sum_{j=1}^{L_k}
 u_{k,j}\!\left(r_{k,j}(\theta_{k,j})e^{i\theta_{k,j}}\right).
\]

\begin{proposition}\label{prop:mean}
There exist absolute constants $c,C>0$ such that
\[
 cL_k\eta_k\log\frac1{\eta_k}
 \le \mathbb E\,B_k\bigl(r_{(k)}(\theta)e^{i\theta_{(k)}}\bigr)
 \le C L_k\eta_k\log\frac1{\eta_k}.
\]
\end{proposition}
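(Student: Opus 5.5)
By linearity of expectation and because each $\theta_{k,j}$ is uniformly distributed on $\T$ under $m_\infty$, we have
\[
 \mathbb E\,B_k\bigl(r_{(k)}(\theta)e^{i\theta_{(k)}}\bigr)=\sum_{j=1}^{L_k}\int_{\T}u_{k,j}\bigl(r_{k,j}(t)e^{it}\bigr)\,dm(t).
\]
It therefore suffices to show that each one-variable integral is $\asymp\eta_k\log(1/\eta_k)$, with absolute constants independent of $j$ and $q_{k,j}$. To do this we change variables. The map $t\mapsto q_{k,j}t$ pushes $m$ forward to $m$. Moreover, $r_{k,j}(t)$ and $u_{k,j}(r_{k,j}(t)e^{it})$ depend on $t$ only through $\varphi_{k,j}(t)=\operatorname{dist}(q_{k,j}t,2\pi\mathbb Z)$: on the good set this is given by \eqref{eq:peak-value}, and off it the Poisson parameter is the constant $\sqrt{a^*(1/4)a^*(2\eta_k)}$ while $\cos(q_{k,j}t)$ is a function of $\varphi_{k,j}$. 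Hence the integral equals $\int_\T G_k(\operatorname{dist}(x,2\pi\mathbb Z))\,dm(x)$ for a single function $G_k$ that does not depend on $j$. Since $\varphi=\operatorname{dist}(x,2\pi\mathbb Z)$ has density $1/\pi$ on $[0,\pi]$, the integral is $\frac1\pi\int_0^\pi G_k(\varphi)\,d\varphi$.

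We split this integral over two regions. On the good region $2\eta_k<\varphi<1/4$, \eqref{eq:peak-value} gives $G_k(\varphi)=\eta_k/\sin\varphi$. Using $\frac{2}{\pi}\varphi\le\sin\varphi\le\varphi$ there, the contribution is comparable to
\[
 \eta_k\int_{2\eta_k}^{1/4}\frac{d\varphi}{\varphi}=\eta_k\log\frac{1}{8\eta_k}.
\]
Because $\eta_k<1/16$, we have $\log\frac1{8\eta_k}\ge \log 2$. This lets us compare $\log\frac1{8\eta_k}$ with $\log\frac1{\eta_k}$ up to absolute constants: $\log(1/\eta_k)=\log 8+\log\frac1{8\eta_k}\le(1+\log 8/\log 2)\log\frac1{8\eta_k}$. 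The good region alone therefore gives the lower bound $c\,\eta_k\log(1/\eta_k)$; the rest of the integrand is nonnegative and can be discarded. On the complementary region, the paper has already shown that $G_k\le C\eta_k$, so its contribution is at most $C\eta_k\le C'\eta_k\log(1/\eta_k)$, using $\log(1/\eta_k)\ge\log16$. Adding the two regions gives the upper bound. Finally, we sum over $j=1,\dots,L_k$.

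The main point requiring care is the reduction to a $j$-independent integral. One must check that $r_{k,j}$ is measurable, and that the dilation $t\mapsto q_{k,j}t$ preserves $m$. The latter holds because $\int_\T h(q t)\,dm(t)=\int_\T h\,dm$ for any integer $q\ge1$, which follows by splitting $\T$ into $q$ arcs. Once this is in place, everything else is an elementary one-variable estimate.
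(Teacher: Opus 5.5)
Your proof is correct and follows the paper's argument. The measure-preserving dilation $t\mapsto q_{k,j}t$ reduces each term to $\frac{\eta_k}{\pi}\int_{2\eta_k}^{1/4}\frac{d\varphi}{\sin\varphi}+O(\eta_k)$, and $\sin\varphi\asymp\varphi$ then gives the two-sided bound before summing over $j$. Your observation that the integrand depends on $t$ only through $\varphi_{k,j}(t)$, and your explicit comparison of $\log\frac{1}{8\eta_k}$ with $\log\frac{1}{\eta_k}$ via $\eta_k<1/16$, simply fill in details the paper leaves implicit.
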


\begin{proof}
The map $t\mapsto q_{k,j}t$ preserves normalized Haar measure. More explicitly, for every $2\pi$-periodic integrable function $F$,
\[
 \int_0^{2\pi}F(q_{k,j}t)\,\frac{dt}{2\pi}
 =\frac1{q_{k,j}}\int_0^{2\pi q_{k,j}}F(s)\,\frac{ds}{2\pi}
 =\int_0^{2\pi}F(s)\,\frac{ds}{2\pi}.
\]
Therefore,
\[
 \int_{\T}
 u_{k,j}\!\left(r_{k,j}(t)e^{it}\right)\,dm(t)
 =\frac{\eta_k}{\pi}
 \int_{2\eta_k}^{1/4}\frac{d\varphi}{\sin\varphi}
 +O(\eta_k).
\]
Since $\sin\varphi\asymp\varphi$ on $(0,1/4)$, this is comparable to $\eta_k\log(1/\eta_k)$. Summing over $j$ proves the estimate.
\end{proof}

\begin{proposition}\label{prop:variance}
There exists an absolute constant $C>0$ such that
\[
 \operatorname{Var}\!\left(B_k\bigl(r_{(k)}(\theta)e^{i\theta_{(k)}}\bigr)\right)
 \le C L_k\eta_k.
\]
\end{proposition}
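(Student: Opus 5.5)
The plan is to use independence of the coordinates and reduce to a one-variable second-moment estimate. Under $m_\infty$ the angles $\theta_{k,j}$, $1\le j\le L_k$, are independent and uniformly distributed on $\T$. The $j$th summand $X_j:=u_{k,j}\bigl(r_{k,j}(\theta_{k,j})e^{i\theta_{k,j}}\bigr)$ of $B_k$ depends only on $\theta_{k,j}$, because the radius $r_{k,j}$ in \eqref{eq:rkj} is a function of $\theta_{k,j}$ alone. Hence the $X_j$ are independent and
\[
 \operatorname{Var}\bigl(B_k\bigl(r_{(k)}(\theta)e^{i\theta_{(k)}}\bigr)\bigr)=\sum_{j=1}^{L_k}\operatorname{Var}(X_j)\le\sum_{j=1}^{L_k}\mathbb E(X_j^2).
\]
It therefore suffices to prove that $\mathbb E(X_j^2)\le C\eta_k$ with an absolute constant $C$, uniformly in $j$ and $k$.

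To bound $\mathbb E(X_j^2)$, I will use the measure-preserving change of variables $t\mapsto q_{k,j}t$, exactly as in the proof of Proposition~\ref{prop:mean}. The function $t\mapsto X_j$ is a function of $\varphi_{k,j}(t)=\operatorname{dist}(q_{k,j}t,2\pi\mathbb Z)$ only, so its distribution is the same as that of the corresponding function of $\varphi=\operatorname{dist}(s,2\pi\mathbb Z)$ with $s$ uniform on $\T$. The density of $\varphi$ is $1/\pi$ on $[0,\pi]$.

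I then split the integral according to the two cases in \eqref{eq:rkj}.

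On the set $2\eta_k<\varphi<1/4$, the identity \eqref{eq:peak-value} gives $X_j=\eta_k/\sin\varphi$. Using $\sin\varphi\ge c\varphi$ on this range, this part contributes at most
\[
 \frac{\eta_k^2}{\pi}\int_{2\eta_k}^{1/4}\frac{d\varphi}{\sin^2\varphi}\le C\eta_k^2\int_{2\eta_k}^{\infty}\frac{d\varphi}{\varphi^2}=C\eta_k.
\]

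On the complementary set, the bound established just before the subsection gives $X_j\le C\eta_k$. This part therefore contributes $O(\eta_k^2)\le O(\eta_k)$.

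Adding the two contributions gives $\mathbb E(X_j^2)\le C\eta_k$. Summing over $j$ then yields $\operatorname{Var}(B_k)\le CL_k\eta_k$.

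The only point needing care is the second-moment bound on the peak region. The lower cutoff $\varphi>2\eta_k$ is exactly what makes $\int\varphi^{-2}$ of size $1/\eta_k$, and this cancels one factor of $\eta_k^2$. I would also check measurability and independence: $r_{k,j}$ is a Borel function of the single angle $\theta_{k,j}$, and distinct coordinates are independent under $m_\infty$. No other step should present difficulty.
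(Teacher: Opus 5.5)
Your proposal is correct and follows the paper's proof. The paper likewise uses that the summands depend on distinct coordinates (via Fubini) to reduce the variance to $\sum_j\operatorname{Var}(u_{k,j})\le\sum_j\int_{\T}u_{k,j}^2\,dm$. It then bounds each second moment by $C\eta_k^2\int_{2\eta_k}^{1/4}\varphi^{-2}\,d\varphi+C\eta_k^2\le C\eta_k$, using \eqref{eq:peak-value} on the peak region and the $O(\eta_k)$ bound elsewhere. Your explicit use of the measure-preserving map $t\mapsto q_{k,j}t$ to reduce to the uniform distribution of $\varphi$ on $[0,\pi]$ simply spells out a step the paper leaves implicit.
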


\begin{proof}
Regard $u_{k,j}$ as the function $e^{i\theta}\mapsto u_{k,j}(r_{k,j}(\theta_{k,j})e^{i\theta_{k,j}})$ on $\T^\infty$. Since $u_{k,i}$ and $u_{k,j}$ depend on distinct coordinates whenever $i\ne j$, Fubini's theorem gives $\mathbb E(u_{k,i}u_{k,j})=\mathbb E u_{k,i}\,\mathbb E u_{k,j}$.
Thus the cross terms vanish and
\[
 \operatorname{Var}\!\left(B_k\bigl(r_{(k)}(\theta)e^{i\theta_{(k)}}\bigr)\right)
 =\sum_{j=1}^{L_k}\operatorname{Var}(u_{k,j})
 \le\sum_{j=1}^{L_k}
 \int_{\T}u_{k,j}\!\left(r_{k,j}(t)e^{it}\right)^2\,dm(t).
\]
To estimate the integral over the whole circle, use \eqref{eq:peak-value} for $t\in\T$ satisfying $2\eta_k<\varphi_{k,j}(t)<1/4$, while for the remaining $t$ one has $u_{k,j}(r_{k,j}(t)e^{it})=O(\eta_k)$. Hence
\[
 \int_{\T}u_{k,j}\!\left(r_{k,j}(t)e^{it}\right)^2\,dm(t)
 \le C\eta_k^2\int_{2\eta_k}^{1/4}\frac{d\varphi}{\varphi^2}
 +C\eta_k^2
 \le C\eta_k.
\]
Therefore,
\[
 \operatorname{Var}\!\left(B_k\bigl(r_{(k)}(\theta)e^{i\theta_{(k)}}\bigr)\right)
 \le C L_k\eta_k.
\]
\end{proof}

By Chebyshev's inequality and Propositions~\ref{prop:mean}--\ref{prop:variance},
\[
 m_\infty\!\left(
 B_k\bigl(r_{(k)}(\theta)e^{i\theta_{(k)}}\bigr)
 <\frac12\mathbb E B_k\bigl(r_{(k)}(\theta)e^{i\theta_{(k)}}\bigr)
 \right)
 \le
 \frac{C}{L_k\eta_k(\log(1/\eta_k))^2}.
\]
To ensure that the block means tend to infinity and that the exceptional measures are summable, it is enough to require $L_k\eta_k\log(1/\eta_k)\to\infty$ and $\sum_k(L_k\eta_k\log^2(1/\eta_k))^{-1}<\infty$. These requirements are compatible with the summability condition above. For each $k\ge1$, take $\eta_k=e^{-(k+2)^3}$ and $L_k=\left\lfloor1/(k^2\eta_k)\right\rfloor$. Since $1/(k^2\eta_k)\ge2$, one has $1/(2k^2)\le L_k\eta_k\le1/k^2$. Hence the summability condition holds, while Propositions~\ref{prop:mean}--\ref{prop:variance} give $\mathbb E B_k(r_{(k)}(\theta)e^{i\theta_{(k)}})\to\infty$ and
\[
 \sum_{k=1}^\infty
 m_\infty\!\left(
 B_k\bigl(r_{(k)}(\theta)e^{i\theta_{(k)}}\bigr)
 <\frac12\mathbb E B_k\bigl(r_{(k)}(\theta)e^{i\theta_{(k)}}\bigr)
 \right)<\infty.
\]
By the Borel--Cantelli lemma, see \cite[Vol.~1, Chapter~I, Section~I]{LiQueffelec}, Proposition~\ref{prop:mean} therefore yields
\begin{equation}\label{eq:B-ae}
 B_k\bigl(r_{(k)}(\theta)e^{i\theta_{(k)}}\bigr)\longrightarrow\infty
 \quad\text{for }m_\infty\text{-almost every }e^{i\theta}\in\T^\infty.
\end{equation}

\subsection{Construction of the monotone radial approach}

Choose $\varepsilon_1>0$ such that $\varepsilon_1<\min\{1,1-\beta_1^{1/q_{1,1}}\}$, and recursively choose
\[
 0<\varepsilon_s<\min\left\{
 \frac{\varepsilon_{s-1}}2,\frac1s,
 1-\max_{1\le k\le s}\beta_k^{1/q_{k,1}}
 \right\},
 \qquad s\ge2.
\]
Then $\varepsilon_s\downarrow0$ and $1-\varepsilon_s>\max_{1\le k\le s}\beta_k^{1/q_{k,1}}$.
Define
\[
 r^{(s)}_{k,j}(\theta)=
 \begin{cases}
 1-\varepsilon_s,&k<s,\\
 r_{s,j}(\theta_{s,j}),&k=s,\\
 0,&k>s.
 \end{cases}
\]
Equation \eqref{eq:block-monotone} and the choice of $\varepsilon_s$ show that the components of $r^{(s)}(\theta)$ are non-increasing. For fixed $(k,j)$, its values are
\[
 0,\ldots,0,\ r_{k,j}(\theta_{k,j}),\
 1-\varepsilon_{k+1},1-\varepsilon_{k+2},\ldots,
\]
and hence increase to $1$. Each $r^{(s)}$ is a Borel map with finite support and takes values in $c_0\cap[0,1)^\infty$.

Finally, \eqref{eq:modulus} and \eqref{eq:B-ae} give
\[
 \left|f\bigl(r^{(s)}(\theta)e^{i\theta}\bigr)\right|
 \le
 \exp\!\left(-B_s\bigl(r_s(\theta)e^{i\theta_s}\bigr)\right)
 \longrightarrow0
\]
for $m_\infty$-almost every $e^{i\theta}\in\T^\infty$. By \eqref{eq:boundary-nonzero}, $f^*(e^{i\theta})\ne0$ almost everywhere. This proves Theorem~\ref{thm:q2}.

\section{Question 3: boundary-point-independent approaches}

Throughout this section, we retain the function $f$ and the notation from Section~2.

\subsection{A boundary-point-independent counterexample}

\begin{lemma}\label{lem:finite-grid}
Fix $k\ge1$ and $1\le j\le L_k$. There is a finite set $D_{k,j}\subset[\alpha_k^{1/q_{k,j}},\beta_k^{1/q_{k,j}}]$ such that, for every $t\in\mathbb T$,
\[
 \max_{\rho\in D_{k,j}}u_{k,j}(\rho e^{it})
 \ge \frac12\,u_{k,j}\!\left(r_{k,j}(t)e^{it}\right).
\]
\end{lemma}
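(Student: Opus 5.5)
The plan is to transfer the problem to the Poisson-kernel parameter and use compactness; since $k$ and $j$ are fixed, no uniformity in $\eta_k$ is needed. Write $q=q_{k,j}$ and $\eta=\eta_k$. For $\rho\in[\alpha_k^{1/q},\beta_k^{1/q}]$, the parameter $a=(1-\eta)\rho^{q}$ runs over the compact interval
\[
I=[a^*(1/4),a^*(2\eta)]\subset(0,1).
\]
The map $\rho\mapsto a$ is a continuous increasing bijection from $[\alpha_k^{1/q},\beta_k^{1/q}]$ onto $I$. Moreover, $u_{k,j}(\rho e^{it})=\eta\,P_a(e^{i\varphi_{k,j}(t)})$, because $P_a(e^{i\psi})$ depends on $\psi$ only through $\cos\psi$, and $\cos(qt)=\cos\varphi_{k,j}(t)$. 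It therefore suffices to build a finite set $A\subset I$ and set
\[
D_{k,j}=\Bigl\{\bigl(a/(1-\eta)\bigr)^{1/q}:a\in A\Bigr\}\cup\Bigl\{(\alpha_k\beta_k)^{1/(2q)}\Bigr\}.
\]

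I will split into two cases according to the definition \eqref{eq:rkj}.

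\emph{Case $\varphi_{k,j}(t)\notin(2\eta,1/4)$.} Here $r_{k,j}(t)=(\alpha_k\beta_k)^{1/(2q)}$, which lies in $D_{k,j}$ by construction. So the maximum over $D_{k,j}$ is at least $u_{k,j}(r_{k,j}(t)e^{it})$, which is more than required. This point does lie in the prescribed interval, since it is the geometric mean of the endpoints.

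\emph{Case $\varphi=\varphi_{k,j}(t)\in(2\eta,1/4)$.} By \eqref{eq:peak-value} we need some $a\in A$ with
\[
P_a(e^{i\varphi})\ge \frac{1}{2\sin\varphi}.
\]
Consider
\[
h(a,\varphi)=\sin\varphi\,P_a(e^{i\varphi})=\frac{(1-a^2)\sin\varphi}{1-2a\cos\varphi+a^2}
\]
on the compact rectangle $K=I\times[2\eta,1/4]$. Since $a\le a^*(2\eta)<1$, the denominator is bounded below by $(1-a)^2>0$ on $K$. Hence $h$ is continuous on $K$, and therefore uniformly continuous.

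By Proposition~\ref{prop:max}, $h(a^*(\varphi),\varphi)=1$. Also $a^*(\varphi)\in I$ for every $\varphi\in[2\eta,1/4]$, because $a^*$ is decreasing. Uniform continuity gives $\delta>0$ such that $|a-a'|<\delta$ implies $|h(a,\varphi)-h(a',\varphi)|<1/2$ for all $\varphi$ in the range. Take $A$ to be a finite $\delta$-net of $I$. Then some $a\in A$ lies within $\delta$ of $a^*(\varphi)$, and for it
\[
h(a,\varphi)>1/2,\qquad\text{that is,}\qquad P_a(e^{i\varphi})>\frac{1}{2\sin\varphi}.
\]
Multiplying by $\eta$ gives the claim.

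The only point needing care is the uniform continuity step: it uses that $I$ stays away from $a=1$ and that the $\varphi$-range is closed. Both hold because $\eta_k$ is fixed. If a quantitative grid were wanted instead, I would use a geometric grid in $1-a$, based on
\[
P_a(e^{i\varphi})=\frac{1-a^2}{(1-a)^2+4a\sin^2(\varphi/2)},
\]
but the compactness argument above suffices for the lemma as stated.
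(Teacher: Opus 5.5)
Your proof is correct and follows essentially the same compactness argument as the paper. You use the same two cases, and you add the default radius $(\alpha_k\beta_k)^{1/(2q_{k,j})}$ to the grid for angles outside $(2\eta_k,1/4)$. The only difference is minor: you take a $\delta$-net in the Poisson parameter, using uniform continuity of the normalized kernel $\sin\varphi\,P_a(e^{i\varphi})$ on $I\times[2\eta_k,1/4]$. The paper instead covers the angle interval by neighborhoods on which a fixed maximizing radius $\rho_\varphi$ stays within a factor $1/2$ of the optimum, and extracts a finite subcover.
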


\begin{proof}
Set $I_{k,j}=[\alpha_k^{1/q_{k,j}},\beta_k^{1/q_{k,j}}]$ and $\rho^0_{k,j}=(\alpha_k\beta_k)^{1/(2q_{k,j})}$. For $2\eta_k\le\varphi\le1/4$, Proposition~\ref{prop:max} gives the maximizing radius $\rho_\varphi=\bigl(a^*(\varphi)/(1-\eta_k)\bigr)^{1/q_{k,j}}\in I_{k,j}$. The map $(\rho,\psi)\mapsto \eta_kP_{(1-\eta_k)\rho^{q_{k,j}}}(e^{i\psi})$ is continuous on the compact set $I_{k,j}\times[2\eta_k,1/4]$, and the map $\varphi\mapsto\rho_\varphi$ is continuous on $[2\eta_k,1/4]$. Hence, for each fixed $\varphi\in[2\eta_k,1/4]$, the function $\psi\mapsto P_{(1-\eta_k)\rho_\varphi^{q_{k,j}}}(e^{i\psi})/P_{(1-\eta_k)\rho_\psi^{q_{k,j}}}(e^{i\psi})$ is continuous in a neighborhood of $\varphi$ and takes the value $1$ at $\psi=\varphi$. Therefore there exists a neighborhood $V_\varphi$ of $\varphi$ such that
\[
 \eta_kP_{(1-\eta_k)\rho_\varphi^{q_{k,j}}}(e^{i\psi})
 \ge \frac12\,
 \eta_kP_{(1-\eta_k)\rho_\psi^{q_{k,j}}}(e^{i\psi}),
 \qquad \psi\in V_\varphi.
\]
By compactness of $[2\eta_k,1/4]$, finitely many of these neighborhoods, say $V_{\varphi_1},\ldots,V_{\varphi_m}$, cover the interval. Put $D^*_{k,j}=\{\rho_{\varphi_1},\ldots,\rho_{\varphi_m}\}$ and $D_{k,j}=D^*_{k,j}\cup\{\rho^0_{k,j}\}$.
If $2\eta_k<\varphi_{k,j}(t)<1/4$, then \eqref{eq:rkj} gives $r_{k,j}(t)=\rho_{\varphi_{k,j}(t)}$, and the preceding finite-cover estimate yields
\[
 \max_{\rho\in D_{k,j}}u_{k,j}(\rho e^{it})
 \ge \frac12\,u_{k,j}\!\left(r_{k,j}(t)e^{it}\right).
\]
If $\varphi_{k,j}(t)\notin(2\eta_k,1/4)$, then \eqref{eq:rkj} gives $r_{k,j}(t)=\rho^0_{k,j}\in D_{k,j}$, so the same inequality holds with constant $1$.
\end{proof}

For each $k$, define $D_k=D_{k,1}\times\cdots\times D_{k,L_k}$. By the separation relation $\alpha_k^{1/q_{k,j}}>\beta_k^{1/q_{k,j+1}}$ established above, every $\rho=(\rho_1,\ldots,\rho_{L_k})\in D_k$ satisfies $\rho_1>\cdots>\rho_{L_k}$.

The next theorem is the main result of this subsection.

\begin{theorem}\label{thm:q3}
There exist $f\in H^\infty(\T^\infty)$ and a boundary-point-independent sequence $R^{(s)}\in c_0\cap[0,1)^\infty$ such that
\begin{enumerate}[label=\rm(\arabic*)]
\item $R^{(s)}_1\ge R^{(s)}_2\ge\cdots$ for every $s$;
\item $R^{(s)}_n\to1$ for every fixed $n$;
\item for almost every $e^{i\theta}\in\T^\infty$, the sequence $f(R^{(s)}e^{i\theta})$ has a subsequence converging to $0$, and therefore does not converge to $f^*(e^{i\theta})$.
\end{enumerate}
In general, for a fixed $n$, the sequence $R^{(s)}_n$ is not monotone in $s$.
\end{theorem}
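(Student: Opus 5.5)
We keep the function $f$ of Section~2 and discretize the point-dependent radii using Lemma~\ref{lem:finite-grid}. For each $k$ the set $D_k$ is finite; enumerate it as $\rho^{(k,1)},\dots,\rho^{(k,M_k)}$. The sequence $(R^{(s)})$ is obtained by concatenating finite blocks: for $k=1,2,\dots$ and $i=1,\dots,M_k$ we list one vector $R^{(k,i)}$, defined by
\[
 R^{(k,i)}_{\ell,j}=
 \begin{cases}
 1-\varepsilon_k,&\ell<k,\\
 \rho^{(k,i)}_j,&\ell=k,\\
 0,&\ell>k,
 \end{cases}
\]
where $\varepsilon_k$ are as in Section~2. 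Each $R^{(s)}$ has finite support, so it lies in $c_0\cap[0,1)^\infty$, and it does not depend on $\theta$.

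\emph{Properties (1) and (2).} Since $D_{k,j}\subset[\alpha_k^{1/q_{k,j}},\beta_k^{1/q_{k,j}}]$, every $\rho\in D_k$ satisfies $\rho_1>\dots>\rho_{L_k}$. Moreover $\rho_1\le\beta_k^{1/q_{k,1}}<1-\varepsilon_k$, and the last block is zero. Hence the coordinates of each $R^{(k,i)}$ are non-increasing, which gives (1). For a fixed coordinate $(\ell,j)$, every vector with block index $k>\ell$ takes the value $1-\varepsilon_k$ there. These values tend to $1$, and only finitely many terms precede them, which gives (2). Monotonicity in $s$ fails in general: within block $k=\ell$ the entries $\rho^{(\ell,i)}_j$ vary over the grid and need not be ordered. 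This explains the final remark of the theorem.

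\emph{Property (3).} This is the main step. By \eqref{eq:modulus} and nonnegativity of every $u_{\ell,j}$,
\[
 \bigl|f(R^{(k,i)}e^{i\theta})\bigr|\le\exp\bigl(-B_k(\rho^{(k,i)}e^{i\theta_{(k)}})\bigr).
\]
Because $D_k$ is a full product set and $B_k$ is a sum of functions of separate coordinates, we can maximize coordinatewise:
\[
 \max_{i}B_k\bigl(\rho^{(k,i)}e^{i\theta_{(k)}}\bigr)=\sum_{j=1}^{L_k}\max_{\rho\in D_{k,j}}u_{k,j}(\rho e^{i\theta_{k,j}})\ge\tfrac12 B_k\bigl(r_{(k)}(\theta)e^{i\theta_{(k)}}\bigr),
\]
using Lemma~\ref{lem:finite-grid}. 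By \eqref{eq:B-ae} the right side tends to $\infty$ for almost every $\theta$. For each $k$, choose an index $i_k(\theta)$ attaining the maximum. Then $f(R^{(k,i_k(\theta))}e^{i\theta})\to0$. These terms occupy strictly increasing positions in the sequence, so they form a subsequence.

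By \eqref{eq:boundary-nonzero}, $f^*(e^{i\theta})\ne0$ almost everywhere, so the full sequence cannot converge to $f^*(e^{i\theta})$. The only delicate points are the coordinatewise maximization, which needs the product structure of $D_k$ and the separation of the variables in $B_k$, and the fact that the selected subsequence depends on $\theta$. The latter is harmless, because property (3) only asks for some subsequence at each point.
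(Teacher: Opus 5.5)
Your proposal is correct and follows the paper's proof essentially step for step: the same concatenation of the finite product grids $D_k$ padded by $1-\varepsilon_k$ and zeros, the same coordinatewise maximization via Lemma~\ref{lem:finite-grid} combined with \eqref{eq:B-ae}, and the same $\theta$-dependent subsequence. Reusing the $\varepsilon_k$ from Section~2 instead of choosing new ones is harmless, since they already satisfy $1-\varepsilon_k>\beta_k^{1/q_{k,1}}$ and $\varepsilon_k\downarrow0$.
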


\begin{proof}
Take the function $f$ constructed in Section~2. Choose $\varepsilon_k\downarrow0$ so that $1-\varepsilon_k>\beta_k^{1/q_{k,1}}$, and write $D_k=\{\rho^{k,1},\ldots,\rho^{k,N_k}\}$. For $1\le\ell\le N_k$, extend $\rho^{k,\ell}$ to $R^{k,\ell}\in c_0\cap[0,1)^\infty$ by
\[
 R^{k,\ell}_{m,j}=
 \begin{cases}
 1-\varepsilon_k,&m<k,\\
 \rho^{k,\ell}_j,&m=k,\\
 0,&m>k,
 \end{cases}
 \qquad 1\le j\le L_m.
\]
Equivalently,
\[
 R^{k,\ell}=
 \bigl(
 \underbrace{1-\varepsilon_k,\ldots,1-\varepsilon_k}_{L_1+\cdots+L_{k-1}\ \mathrm{coordinates}},
 \rho^{k,\ell}_1,\ldots,\rho^{k,\ell}_{L_k},
 0,0,\ldots
 \bigr).
\]
Let $(R^{(s)})_{s\ge1}$ be the sequence obtained by concatenating, in increasing order of $k$, the finite lists $R^{k,1},\ldots,R^{k,N_k}$. Equivalently, the sequence has the block form
\[
\begin{aligned}
&\bigl(\rho^{1,1}_1,\ldots,\rho^{1,1}_{L_1},0,0,\ldots\bigr),\ldots,
 \bigl(\rho^{1,N_1}_1,\ldots,\rho^{1,N_1}_{L_1},0,0,\ldots\bigr),\\
&\bigl(\underbrace{1-\varepsilon_2,\ldots,1-\varepsilon_2}_{L_1},
 \rho^{2,1}_1,\ldots,\rho^{2,1}_{L_2},0,0,\ldots\bigr),\ldots,\\
&\hspace{35mm}\ldots,\\
&\bigl(\underbrace{1-\varepsilon_k,\ldots,1-\varepsilon_k}_{L_1+\cdots+L_{k-1}},
 \rho^{k,\ell}_1,\ldots,\rho^{k,\ell}_{L_k},0,0,\ldots\bigr),\ldots .
\end{aligned}
\]

For every $k$ and $\rho\in D_k$, one has $1-\varepsilon_k>\rho_1>\cdots>\rho_{L_k}>0$, so every $R^{k,\ell}$ has non-increasing components. This proves~\rm(i). Fix a coordinate in the $k_0$th block. Every vector belonging to a later block $k>k_0$ has value $1-\varepsilon_k$ at this coordinate. Since only finitely many terms of $(R^{(s)})$ come from blocks $k\le k_0$ and $\varepsilon_k\downarrow0$, assertion~\rm(ii) follows.

Fix $\theta$ for which \eqref{eq:B-ae} holds. For each $k$ and $1\le j\le L_k$, choose $\rho_{k,j}(\theta)\in D_{k,j}$ at which $\max_{\rho\in D_{k,j}}u_{k,j}(\rho e^{i\theta_{k,j}})$ is attained, and put $\rho_k(\theta):=(\rho_{k,1}(\theta),\ldots,\rho_{k,L_k}(\theta))\in D_k$. By Lemma~\ref{lem:finite-grid}, this choice makes the block sum at least $\frac12 B_k\bigl(r_{(k)}(\theta)e^{i\theta_{(k)}}\bigr)$. Hence, by \eqref{eq:B-ae}, the block sum tends to infinity.

Choose $\ell(k,\theta)$ so that $\rho^{k,\ell(k,\theta)}=\rho_k(\theta)$. Then $(R^{k,\ell(k,\theta)})_{k\ge1}$ is a subsequence of $(R^{(s)})$, and \eqref{eq:modulus} gives
\[
 \left|f\bigl(R^{k,\ell(k,\theta)}e^{i\theta}\bigr)\right|
 \le
 \exp\!\left[-\sum_{j=1}^{L_k}u_{k,j}(\rho_{k,j}(\theta)e^{i\theta_{k,j}})\right]
 \longrightarrow0.
\]
Hence $0$ is a cluster point of the sequence. Since $f^*(e^{i\theta})\ne0$ almost everywhere by \eqref{eq:boundary-nonzero}, the full sequence cannot converge to the correct boundary value.
\end{proof}

\begin{remark}\label{rem:q3-wording}
Theorem~\ref{thm:q3} answers Question~3 as stated in \cite{AOS}. The condition required there is that the components of each radius vector are non-increasing. The sequence above need not satisfy the additional componentwise monotonicity $R_n^{(s)}\le R_n^{(s+1)}$ for every fixed $n$.
\end{remark}

\subsection{The additional componentwise increasing requirement}

We now impose the additional requirement that the sequence of radius vectors be componentwise increasing. We derive the resulting maximal estimate from a martingale representation of convolution operators. We first formulate the argument on a compact abelian group and then specialize it to the product Poisson measures on $\T^\infty$.

All $\sigma$-algebras in this subsection are understood relative to the ambient probability space. If $X_1,\ldots,X_n$ are measurable maps, then $\sigma(X_1,\ldots,X_n)$ denotes the $\sigma$-algebra generated by these maps. On $\T^\infty$, we write $z_n(\zeta)=\zeta_n$ for the $n$th coordinate map, where $\zeta=(\zeta_m)_{m\ge1}\in\T^\infty$.

If $(\Omega,\mathcal F,\mu)$ is a probability space, $F\in L^1(\Omega,\mathcal F,\mu)$, and $\mathcal G\subset\mathcal F$ is a sub-$\sigma$-algebra, then $\mathbb E_\mu[F\mid\mathcal G]$ denotes the $\mathcal G$-measurable function satisfying
\[
 \int_A\mathbb E_\mu[F\mid\mathcal G] \,d\mu
 =\int_A F\,d\mu
 \qquad(A\in\mathcal G).
\]
A sequence $(M_s,\mathcal F_s)$ is a martingale on $(\Omega,\mathcal F,\mu)$ if $(\mathcal F_s)$ is an increasing filtration, each $M_s$ is $\mathcal F_s$-measurable and integrable, and $\mathbb E_\mu[M_t\mid\mathcal F_s]=M_s$ for $s\le t$.

We begin with a standard martingale example. Let $H_n:=\prod_{j=n+1}^\infty\T_j$, viewed as a closed subgroup of $\T^\infty$, and let $\mu_n$ be its normalized Haar measure. Then $f*\mu_n=\mathbb E(f\mid\sigma(z_1,\ldots,z_n))$ and $\mu_n*\mu_m=\mu_{\min\{n,m\}}$. Consequently, for every fixed $N$, $(f*\mu_s*\mu_{s+1}*\cdots*\mu_N)_{s=1}^N$ is a martingale with respect to $\mathcal F_s=\sigma(z_1,\ldots,z_s)$. The same argument gives the following convolution formulation.

For a probability measure $\mu$ on a compact abelian group $G$, define $f*\mu$ by $(f*\mu)(x):=\int_G f(xy^{-1})\,d\mu(y)$. Let $\check\mu$ denote the image of $\mu$ under inversion.

\begin{proposition}\label{prop:compact-group-martingale}
Let $G$ be a compact abelian group with normalized Haar measure $m_G$, let $\mu_1,\ldots,\mu_N$ be probability measures on $G$, and put $d\mathbb P=dm_G(x)\,d\mu_1(u_1)\cdots d\mu_N(u_N)$ on $G^{N+1}$. With $F=f(xu_1^{-1}\cdots u_N^{-1})$ and $\mathcal F_s=\sigma(x,u_1,\ldots,u_{s-1})$, one has
\begin{equation}\label{eq:compact-dilation}
 \mathbb E_{\mathbb P}[F\mid\mathcal F_s]
 =
 (f*\mu_s*\cdots*\mu_N)(xu_1^{-1}\cdots u_{s-1}^{-1}).
\end{equation}
Consequently these conditional expectations form a martingale.
\end{proposition}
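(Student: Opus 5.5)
We verify \eqref{eq:compact-dilation} directly from the definition of conditional expectation, using Fubini's theorem and the independence of the coordinates under the product measure $\mathbb P$; the martingale property then follows from the tower property. We assume $f\in L^1(G,m_G)$, so that $F\in L^1(\mathbb P)$. Indeed, for fixed $u_1,\ldots,u_N$ the map $x\mapsto xu_1^{-1}\cdots u_N^{-1}$ preserves $m_G$, so $\int|F|\,d\mathbb P=\|f\|_{L^1(m_G)}$.

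Fix $s$ and set $g_s:=f*\mu_s*\cdots*\mu_N$, with $g_{N+1}:=f$. We first record the iterated-integral formula. For $m_G$-almost every $y\in G$,
\[
 g_s(y)=\int_{G^{N-s+1}} f(y\,u_s^{-1}\cdots u_N^{-1})\,d\mu_s(u_s)\cdots d\mu_N(u_N).
\]
This follows by induction on the number of convolved measures. The definition $(h*\mu)(y)=\int h(yu^{-1})\,d\mu(u)$ gives one step. Associativity, i.e.\ the identity $(h*\mu)*\nu=h*(\mu*\nu)$, and commutativity of $G$ show that the order of the factors $u_j^{-1}$ is irrelevant. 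The almost-everywhere issue is handled by Fubini, since each $g_s$ is in $L^1$ and the formula holds wherever the integral converges absolutely.

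Next let $G_s(x,u_1,\ldots,u_{s-1}):=g_s(xu_1^{-1}\cdots u_{s-1}^{-1})$. This is $\mathcal F_s$-measurable, being a Borel function of $(x,u_1,\ldots,u_{s-1})$ composed with a continuous group operation. To see that it is well defined $\mathbb P$-a.e., note that the law of $xu_1^{-1}\cdots u_{s-1}^{-1}$ under $\mathbb P$ is $m_G$, by translation invariance of Haar measure. Hence null-set modifications of $g_s$ do not matter.

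We then check the defining identity. By a monotone class (or $\pi$–$\lambda$) argument it suffices to test against rectangles $A=A_0\times A_1\times\cdots\times A_{s-1}\times G^{N-s+1}$, which generate $\mathcal F_s$. For such $A$, Fubini lets us integrate first in $(u_s,\ldots,u_N)$ with $(x,u_1,\ldots,u_{s-1})$ fixed. Because $\mathbb P$ is a product measure, the inner integral is exactly the iterated-integral formula above evaluated at $y=xu_1^{-1}\cdots u_{s-1}^{-1}$, so
\[
 \int_A F\,d\mathbb P=\int_A G_s\,d\mathbb P .
\]
Both sides of this identity are finite signed measures in $A$ (by integrability of $F$ and of $G_s$, the latter since $\|G_s\|_{L^1(\mathbb P)}=\|g_s\|_1\le\|f\|_1$). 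Agreement on a $\pi$-system generating $\mathcal F_s$ therefore gives agreement on all of $\mathcal F_s$, which proves \eqref{eq:compact-dilation}.

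Finally, the $\mathcal F_s$ increase in $s$, so $M_s:=\mathbb E_{\mathbb P}[F\mid\mathcal F_s]$ satisfies
\[
 \mathbb E[M_t\mid\mathcal F_s]=\mathbb E\bigl[\mathbb E[F\mid\mathcal F_t]\bigm|\mathcal F_s\bigr]=M_s \qquad (s\le t)
\]
by the tower property, and each $M_s$ is integrable. This is the martingale claim. The only delicate point is the measure-theoretic bookkeeping: measurability of $G_s$ and its insensitivity to null sets, which is the translation-invariance observation above, together with justifying Fubini via $L^1$ integrability. The algebra itself is immediate.
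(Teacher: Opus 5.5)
Your proposal is correct and follows the paper's proof: fix $(x,u_1,\ldots,u_{s-1})$, integrate out $u_s,\ldots,u_N$ by Fubini to get $(f*\mu_s*\cdots*\mu_N)(xu_1^{-1}\cdots u_{s-1}^{-1})$, check the defining identity on measurable rectangles, and extend to all of $\mathcal F_s$. You also supply the bookkeeping the paper leaves implicit: integrability of $F$, insensitivity to null-set modifications via translation invariance of $m_G$, the $\pi$--$\lambda$ extension, and the tower property for the martingale claim.
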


\begin{proof}
Keeping $x,u_1,\ldots,u_{s-1}$ fixed and integrating over $u_s,\ldots,u_N$ gives the right-hand side of \eqref{eq:compact-dilation}. The defining identity for conditional expectation follows first on measurable rectangles and then on $\mathcal F_s$.
\end{proof}

This martingale representation is the form of the Rota--Doob argument used below; compare Rota's dilation method \cite{Rota} and Doob's martingale framework \cite{DoobRatio}.

\begin{theorem}[Doob's maximal inequality; see Theorem~14.11 in \cite{Williams}]\label{thm:doob}
Let $(M_s,\mathcal F_s)_{s=1}^{N}$ be an $L^p$ martingale on a probability space, where $1<p<\infty$. Then
\[
 \left\|\max_{1\le s\le N}|M_s|\right\|_p
 \le\frac{p}{p-1}\|M_N\|_p.
\]
\end{theorem}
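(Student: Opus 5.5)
This is the classical result quoted from \cite{Williams}; I would reprove it in the two standard stages: a weak-type (Doob--Kolmogorov) estimate for the maximal function, then integration over levels combined with H\"older's inequality. Write $M^*=\max_{1\le s\le N}|M_s|$. Since the maximum is over finitely many $L^p$ functions, $M^*\in L^p$ and $\|M^*\|_p<\infty$. This finiteness is what lets us divide at the end.

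\textbf{Step 1 (weak-type bound).} I would show that for every $\lambda>0$,
\[
 \lambda\,\mathbb P(M^*\ge\lambda)\le\mathbb E\bigl[|M_N|\,\mathbf 1_{\{M^*\ge\lambda\}}\bigr].
\]
Decompose $\{M^*\ge\lambda\}$ into the disjoint sets
\[
 A_s=\{|M_s|\ge\lambda,\ |M_r|<\lambda\ \text{for } r<s\},\qquad 1\le s\le N,
\]
so that each $A_s\in\mathcal F_s$. The martingale property gives $M_s=\mathbb E[M_N\mid\mathcal F_s]$. Conditional Jensen then gives $|M_s|\le\mathbb E[|M_N|\mid\mathcal F_s]$. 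Hence
\[
 \lambda\,\mathbb P(A_s)\le\mathbb E[|M_s|\mathbf 1_{A_s}]\le\mathbb E[|M_N|\mathbf 1_{A_s}].
\]
Summing over $s$ gives the weak-type bound. This is the only step that uses the martingale structure, so I regard it as the crux. The key point is that each $A_s$ is measurable with respect to $\mathcal F_s$, i.e.\ it is the first-passage (stopping time) decomposition.

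\textbf{Step 2 (layer cake and H\"older).} By the layer-cake formula and Step 1,
\[
 \mathbb E[(M^*)^p]=\int_0^\infty p\lambda^{p-1}\mathbb P(M^*\ge\lambda)\,d\lambda
 \le\int_0^\infty p\lambda^{p-2}\,\mathbb E\bigl[|M_N|\mathbf 1_{\{M^*\ge\lambda\}}\bigr]\,d\lambda .
\]
Tonelli's theorem lets us exchange the integrals, and the right-hand side becomes $\frac{p}{p-1}\mathbb E\bigl[|M_N|(M^*)^{p-1}\bigr]$. H\"older's inequality with exponents $p$ and $p/(p-1)$ bounds this by $\frac{p}{p-1}\|M_N\|_p\,\|M^*\|_p^{p-1}$.

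\textbf{Step 3 (conclusion).} If $\|M^*\|_p=0$, there is nothing to prove. Otherwise divide by $\|M^*\|_p^{p-1}$, which is finite by the opening remark, to obtain
\[
 \|M^*\|_p\le\frac{p}{p-1}\|M_N\|_p.
\]
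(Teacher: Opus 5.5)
Your proof is correct, and it is the standard argument behind the cited Theorem~14.11 of Williams: the first-passage decomposition gives the weak-type bound for the nonnegative submartingale $|M_s|$ (your conditional Jensen step), and the layer-cake/Tonelli/H\"older computation then yields the $L^p$ bound, with the final division justified by $\|M^*\|_p\le\sum_s\|M_s\|_p<\infty$. The paper gives no proof of its own and simply quotes this classical result, so there is nothing further to compare.
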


\begin{theorem}\label{thm:compact-group-maximal}
Let $G$ be a compact abelian group with normalized Haar measure $m_G$, and let $(\lambda_s)_{s\ge1}$ be probability measures on $G$ such that, for every $s\ge1$, there exists a probability measure $\kappa_s$ satisfying
\begin{equation}\label{eq:compact-chain}
 \lambda_s=\lambda_{s+1}*\kappa_s.
\end{equation}
Put $\mu_s:=\lambda_s*\check\lambda_s$ and $T_sf:=f*\mu_s$. Then, for every $1<p<\infty$ and every $f\in L^p(G)$,
\begin{equation}\label{eq:compact-maximal}
 \left\|\sup_{s\ge1}|T_sf|\right\|_{L^p(G)}
 \le \frac{p}{p-1}\|f\|_{L^p(G)}.
\end{equation}
If, in addition,
\begin{equation}\label{eq:compact-approx-id}
 \mu_s\overset{w^*}{\longrightarrow}\delta_e
 \qquad(s\to\infty),
\end{equation}
where $e$ is the identity of $G$, then
\[
 T_sf\longrightarrow f
 \quad\text{almost everywhere and in }L^p(G).
\]
\end{theorem}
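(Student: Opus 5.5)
The idea is Rota's dilation trick. We realize $T_sf=f*\lambda_s*\check\lambda_s$ as a conditional expectation of a martingale on an enlarged probability space, apply Theorem~\ref{thm:doob}, and then take a supremum over $N$. Fix $N$. Iterating \eqref{eq:compact-chain} gives
\[
\lambda_s=\lambda_N*\kappa_{N-1}*\cdots*\kappa_s\qquad(1\le s\le N).
\]
On $G^{N+1}$ take the product probability $d\mathbb P=dm_G(x)\,d\lambda_N(u)\,d\kappa_{N-1}(k_{N-1})\cdots d\kappa_1(k_1)$. Define
\[
v_s:=u\,k_{N-1}\cdots k_s,\qquad X_s:=x\,v_s .
\]
Then $v_s$ has law $\lambda_s$. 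Since $x$ is Haar distributed and independent of $v_s$, $X_s$ is Haar distributed and independent of $v_s$. Moreover $X_{s-1}=X_sk_{s-1}$. Consequently the $\sigma$-algebras $\mathcal H_s:=\sigma(X_s,k_{s-1},\dots,k_1)$ increase in $s$.

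The first step is to identify the martingale. I will show
\[
\mathbb E_{\mathbb P}[f(x)\mid\mathcal H_s]=(f*\lambda_s)(X_s)=:M_s .
\]
Write $x=X_sv_s^{-1}$. The variable $v_s$ is independent of $(X_s,k_{s-1},\dots,k_1)$, because $(x,v_s)$ is independent of the $k_j$ with $j<s$, and $X_s$ is independent of $v_s$. Integrating out $v_s$ against $\lambda_s$ therefore gives the formula, arguing on rectangles as in Proposition~\ref{prop:compact-group-martingale}. So $(M_s,\mathcal H_s)_{s=1}^N$ is a martingale, and $\|M_N\|_p\le\|f(x)\|_p=\|f\|_{L^p(G)}$ by Jensen.

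The second step recovers $T_s$. Conditioning instead on $\sigma(x)$ gives
\[
\mathbb E[M_s\mid x]=\int_G (f*\lambda_s)(xv)\,d\lambda_s(v)=\bigl((f*\lambda_s)*\check\lambda_s\bigr)(x)=T_sf(x),
\]
using commutativity of $G$. Hence pointwise
\[
\max_{s\le N}|T_sf(x)|\le\mathbb E\bigl[\max_{s\le N}|M_s|\,\big|\,x\bigr].
\]
Conditional expectation is an $L^p$ contraction, so Theorem~\ref{thm:doob} yields
\[
\Bigl\|\max_{s\le N}|T_sf|\Bigr\|_p\le\tfrac{p}{p-1}\|M_N\|_p\le\tfrac{p}{p-1}\|f\|_p .
\]
Letting $N\to\infty$ by monotone convergence gives \eqref{eq:compact-maximal}. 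The main point to check carefully is the independence claim in the first step, namely that $X_s$ and $v_s$ are independent and that the $k_j$ with $j<s$ are independent of both. This follows from the translation invariance of $m_G$ together with the product structure of $\mathbb P$.

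For convergence under \eqref{eq:compact-approx-id}, first take $f\in C(G)$. By uniform continuity and weak$^*$ convergence $\mu_s\to\delta_e$, $T_sf\to f$ uniformly: split $\int|f(xy^{-1})-f(x)|\,d\mu_s(y)$ over a neighborhood $V$ of $e$ and its complement, whose $\mu_s$-mass tends to $0$ by Urysohn. For general $f\in L^p$ choose continuous $h$ with $\|f-h\|_p<\delta$. Then $\limsup_s|T_sf-f|\le\sup_s|T_s(f-h)|+|f-h|$. The maximal inequality and Chebyshev give
\[
m_G\bigl(\limsup_s|T_sf-f|>\epsilon\bigr)\le C_p\,\delta^p/\epsilon^p ,
\]
so $T_sf\to f$ almost everywhere. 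For $L^p$ convergence, each $T_s$ is an $L^p$ contraction, which together with uniform convergence on the dense class $C(G)$ gives the claim.
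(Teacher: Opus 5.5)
Your proposal is correct and is the paper's proof written out directly. Your $x,u,k_j$ are the paper's $z,u_N,u_j$, and your martingale $(f*\lambda_s)(X_s)$ with filtration $\sigma(X_s,k_{s-1},\dots,k_1)$ is exactly the paper's $M_s=(f*\lambda_s)(zu_N\cdots u_s)$ with $\sigma(zu_N\cdots u_1,u_1,\dots,u_{s-1})$; the only differences are cosmetic. You check the martingale identity by independence instead of by the change of variables and Proposition~\ref{prop:compact-group-martingale}, and you use uniform convergence on $C(G)$, via uniform continuity and Urysohn, where the paper uses characters and trigonometric polynomials.
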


\begin{proof}
Fix $N$ and set $\kappa_N:=\lambda_N$. Iterating \eqref{eq:compact-chain} gives $\kappa_s*\cdots*\kappa_N=\lambda_s$ for $1\le s\le N$. Equip $\Omega=G^{N+1}$ with $d\mathbb P=dm_G(z)\,d\kappa_1(u_1)\cdots d\kappa_N(u_N)$, and put $x=zu_N\cdots u_1$. For fixed $u_1,\ldots,u_N$, the map $z\mapsto x$ is a translation of $G$; hence Haar invariance shows that $(z,u_1,\ldots,u_N)\mapsto(x,u_1,\ldots,u_N)$ preserves $\mathbb P$. Under this measure-preserving change of variables, Proposition~\ref{prop:compact-group-martingale}, applied with $\mu_j=\kappa_j$, yields, after returning to $(z,u_1,\ldots,u_N)$, the $L^p$ martingale $M_s=(f*\lambda_s)(zu_N\cdots u_s)$, $1\le s\le N$.

Conditioning with respect to $\sigma(z)$ and using $\kappa_s*\cdots*\kappa_N=\lambda_s$ gives $\mathbb E_{\mathbb P}[M_s\mid\sigma(z)]=\int_G (f*\lambda_s)(zy)\,d\lambda_s(y)$. By the definition of convolution, the last expression equals $(f*\lambda_s)*\check\lambda_s(z)=T_sf(z)$. Therefore
\[
 \max_{1\le s\le N}|T_sf|
 \le
 \mathbb E_{\mathbb P}\!\left[
 \max_{1\le s\le N}|M_s|\,\middle|\,\sigma(z)
 \right].
\]
By the $L^p$-contractivity of conditional expectation and Doob's maximal inequality,
\[
 \left\|\max_{1\le s\le N}|T_sf|\right\|_{L^p(G)}
 \le
 \left\|\max_{1\le s\le N}|M_s|\right\|_{L^p(\Omega)}
 \le
 \frac{p}{p-1}\|M_N\|_{L^p(\Omega)}
 \le
 \frac{p}{p-1}\|f\|_{L^p(G)}.
\]
The last inequality follows from Haar invariance and the $L^p$-contractivity of convolution by the probability measure $\lambda_N$.
Letting $N\to\infty$ and using monotone convergence proves \eqref{eq:compact-maximal}.

Assume in addition that \eqref{eq:compact-approx-id} holds. If $\gamma$ is a character of $G$, then
\[
 T_s\gamma(x)
 =\gamma(x)\int_G\gamma(y^{-1})\,d\mu_s(y)
 \longrightarrow \gamma(x)
\]
uniformly on $G$, by the weak-* convergence $\mu_s\to\delta_e$. Hence $T_sQ\to Q$ uniformly for every trigonometric polynomial $Q$ on $G$.

Let $f\in L^p(G)$ and $\varepsilon>0$. For every trigonometric polynomial $Q$,
\[
 \limsup_{s\to\infty}|T_sf-f|
 \le
 \sup_{s\ge1}|T_s(f-Q)|+|f-Q|
\]
almost everywhere. Consequently, by \eqref{eq:compact-maximal} and Chebyshev's inequality,
\[
\begin{aligned}
 m_G\!\left(\limsup_{s\to\infty}|T_sf-f|>2\varepsilon\right)
 &\le
 m_G\!\left(\sup_{s\ge1}|T_s(f-Q)|>\varepsilon\right)
 +m_G(|f-Q|>\varepsilon)\\
 &\le
 \varepsilon^{-p}
 \left[\left(\frac{p}{p-1}\right)^p+1\right]
 \|f-Q\|_p^p.
\end{aligned}
\]
Since trigonometric polynomials are dense in $L^p(G)$, the right-hand side can be made arbitrarily small. Thus $T_sf\to f$ almost everywhere.

Finally, since each $T_s$ is an $L^p$ contraction,
\[
 \|T_sf-f\|_p
 \le
 2\|f-Q\|_p+\|T_sQ-Q\|_p.
\]
Choose $Q$ sufficiently close to $f$ in $L^p(G)$ and then let $s\to\infty$. This proves the $L^p$ convergence.
\end{proof}

If each $\lambda_s$ is symmetric, then $\check\lambda_s=\lambda_s$ and $\mu_s=\lambda_s*\lambda_s$. Thus Theorem~\ref{thm:compact-group-maximal} applies to families $(\mu_s)$ with convolution square roots $\lambda_s$ satisfying the factorization condition \eqref{eq:compact-chain}. The product Poisson measures satisfy this condition.

Let
\[
 \mathbb Z^{(\mathbb N)}
 :=\{\gamma=(\gamma_n)_{n\ge1}\in\mathbb Z^\infty:
 \gamma_n=0\text{ for all but finitely many }n\},
 \qquad
 z^\gamma:=\prod_{n\ge1}z_n^{\gamma_n}.
\]
For $r=(r_n)_{n\ge1}\in[0,1]^\infty$, let $\nu_r$ be the product probability measure whose $n$th factor is $P_{r_n}\,dm$ when $r_n<1$ and the point mass at $1$ when $r_n=1$. Equivalently, $\widehat{\nu_r}(\gamma)=\prod_{n\ge1}r_n^{|\gamma_n|}$ for $\gamma\in\mathbb Z^{(\mathbb N)}$.
For $g\in L^p(\T^\infty)$, define $P_rg:=\nu_r*g$. Then $\nu_a*\nu_b=\nu_{ab}$ and $P_aP_b=P_{ab}$.

The next theorem specializes Theorem~\ref{thm:compact-group-maximal} to componentwise increasing radial approaches.

\begin{theorem}\label{thm:coordinatewise-monotone}
Let $(r^{(s)})_{s\ge1}$ be a boundary-point-independent approach and suppose
\[
 0\le r_n^{(1)}\le r_n^{(2)}\le\cdots\uparrow1
 \qquad(n\ge1).
\]
Then, for every $1<p<\infty$,
\[
 \left\|\sup_{s\ge1}|P_{r^{(s)}}g|\right\|_p
 \le \frac{p}{p-1}\|g\|_p,
 \qquad g\in L^p(\T^\infty),
\]
and
\[
 P_{r^{(s)}}g\longrightarrow g
 \quad\text{almost everywhere and in }L^p(\T^\infty).
\]
\end{theorem}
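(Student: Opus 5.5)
The plan is to apply Theorem~\ref{thm:compact-group-maximal} with $G=\T^\infty$, $m_G=m_\infty$, and convolution square roots of the product Poisson measures. For each $s$ define $\lambda_s:=\nu_{\sqrt{r^{(s)}}}$, where $\sqrt{r^{(s)}}=(\sqrt{r^{(s)}_n})_{n\ge1}$. Each $\nu_r$ is symmetric, since $\widehat{\nu_r}(\gamma)=\prod_n r_n^{|\gamma_n|}$ is invariant under $\gamma\mapsto-\gamma$. Hence $\check\lambda_s=\lambda_s$, and the semigroup identity $\nu_a*\nu_b=\nu_{ab}$ gives
\[
 \mu_s=\lambda_s*\check\lambda_s=\nu_{\sqrt{r^{(s)}}}*\nu_{\sqrt{r^{(s)}}}=\nu_{r^{(s)}},
\]
so that $T_sg=P_{r^{(s)}}g$.

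For the factorization condition \eqref{eq:compact-chain}, use the componentwise monotonicity $r^{(s)}_n\le r^{(s+1)}_n$ to define $c^{(s)}_n:=\sqrt{r^{(s)}_n/r^{(s+1)}_n}\in[0,1]$ when $r^{(s+1)}_n>0$, and $c^{(s)}_n:=0$ otherwise. Set $\kappa_s:=\nu_{c^{(s)}}$. Then $\sqrt{r^{(s+1)}_n}\,c^{(s)}_n=\sqrt{r^{(s)}_n}$ for every $n$; when $r^{(s+1)}_n=0$ this holds because $r^{(s)}_n=0$ as well. Comparing Fourier coefficients $\prod_n(\cdot)^{|\gamma_n|}$ on the finitely supported $\gamma\in\mathbb Z^{(\mathbb N)}$ gives $\lambda_{s+1}*\kappa_s=\lambda_s$. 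This is where the monotonicity hypothesis enters: without it $c^{(s)}_n$ could exceed $1$, and $\nu_{c^{(s)}}$ would not be a probability measure. The cases $r_n=1$ (point mass) and $r_n=0$ (Haar measure on that factor) are covered by the stated convention for $\nu_r$. The formula for $\widehat{\nu_r}$ holds in all cases, and a probability measure on $\T^\infty$ is determined by its Fourier coefficients on $\mathbb Z^{(\mathbb N)}$.

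Theorem~\ref{thm:compact-group-maximal} then yields the maximal inequality with constant $p/(p-1)$. For the convergence statement, it remains to verify the approximate-identity condition \eqref{eq:compact-approx-id}, namely $\nu_{r^{(s)}}\to\delta_e$ weak-*. Since trigonometric polynomials are uniformly dense in $C(\T^\infty)$ by Stone--Weierstrass, it suffices to check convergence on characters. For $\gamma\in\mathbb Z^{(\mathbb N)}$, the coefficient $\widehat{\nu_{r^{(s)}}}(\gamma)=\prod_{n\in\operatorname{supp}\gamma}(r^{(s)}_n)^{|\gamma_n|}$ is a finite product, and it tends to $1=\widehat{\delta_e}(\gamma)$ because each $r^{(s)}_n\uparrow1$. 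The second part of Theorem~\ref{thm:compact-group-maximal} then gives $P_{r^{(s)}}g\to g$ almost everywhere and in $L^p$.

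I expect the only delicate step to be the factorization $\lambda_s=\lambda_{s+1}*\kappa_s$ in the degenerate coordinates: when $r^{(s+1)}_n=0$, and when $r^{(s+1)}_n=1$ with $r^{(s)}_n<1$. These coordinates must be checked factorwise so that each factor of $\kappa_s$ is a genuine probability measure. Handling them through Fourier coefficients as above settles this. Convergence is not an issue at all, since the maximal inequality is uniform in $N$ and does not depend on the coordinates being finite in number.
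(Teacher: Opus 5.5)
Your proposal is correct and follows essentially the same route as the paper's proof. You use the square-root measures $\lambda_s=\nu_{\sqrt{r^{(s)}}}$ and the same ratio factors (your $c^{(s)}_n$ is the paper's $\alpha^{(s)}_n$) to obtain $\lambda_s=\lambda_{s+1}*\kappa_s$, use symmetry to get $\mu_s=\nu_{r^{(s)}}$, and check weak-* convergence on characters before invoking Theorem~\ref{thm:compact-group-maximal}. Your Fourier-coefficient verification simply spells out the degenerate coordinates $r^{(s+1)}_n\in\{0,1\}$ more explicitly than the paper does.
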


\begin{proof}
Put $q^{(s)}=\sqrt{r^{(s)}}$ and $\lambda_s=\nu_{q^{(s)}}$.
Since $q^{(s)}\le q^{(s+1)}$ coordinatewise, define
\[
 \alpha_n^{(s)}=
 \begin{cases}
 q_n^{(s)}/q_n^{(s+1)},&q_n^{(s+1)}>0,\\
 0,&q_n^{(s)}=q_n^{(s+1)}=0.
 \end{cases}
\]
Then $\lambda_s=\lambda_{s+1}*\nu_{\alpha^{(s)}}$. Each product Poisson measure is symmetric, so $\lambda_s*\check\lambda_s=\nu_{q^{(s)}}*\nu_{q^{(s)}}=\nu_{r^{(s)}}$. Moreover, $r_n^{(s)}\to1$ for every fixed $n$ implies $\nu_{r^{(s)}}\overset{w^*}{\longrightarrow}\delta_{(1,1,\ldots)}$.
The convergence holds first on each character $z^\gamma$, since $\gamma$ has finite support, and then on $C(\T^\infty)$ by density of the trigonometric polynomials. Theorem~\ref{thm:compact-group-maximal} now gives the conclusion.
\end{proof}

\begin{corollary}\label{cor:Hp-radial-convergence}
Let $1<p\le\infty$, and put
\[
 X_p=
 \begin{cases}
 \ell^2,&1<p<\infty,\\
 c_0,&p=\infty.
 \end{cases}
\]
Let $f\in H^p(\T^\infty)$, and let $F$ denote its canonical holomorphic extension to $\D^\infty\cap X_p$. Suppose that $\{r^{(s)}\}_{s\ge1}\subset\D^\infty\cap X_p$ is boundary-point independent and satisfies
\[
 0\le r_n^{(1)}\le r_n^{(2)}\le\cdots\uparrow1
 \qquad(n\ge1).
\]
Then
\[
 F\bigl(r^{(s)}e^{i\theta}\bigr)
 \longrightarrow f(e^{i\theta})
\]
for $m_\infty$-almost every $e^{i\theta}\in\T^\infty$.
\end{corollary}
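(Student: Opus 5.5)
The plan is to identify $F(r e^{i\theta})$ with the product Poisson integral $P_r f(e^{i\theta})$ for each admissible radius $r\in\D^\infty\cap X_p$, and then to invoke Theorem~\ref{thm:coordinatewise-monotone}. Since $H^\infty(\T^\infty)\subset H^2(\T^\infty)$, and since $r\in c_0\cap[0,1)^\infty$ need not lie in $\ell^2$, the case $p=\infty$ will need a separate argument for the identification. For the convergence step we can use $L^q$ with any $1<q<\infty$.

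\textbf{Step 1 (Poisson representation).} Fix $r\in[0,1)^\infty\cap X_p$. First let $f$ be an analytic polynomial $\sum_{\alpha}c_\alpha z^\alpha$. Since $\widehat{\nu_r}(\gamma)=\prod_n r_n^{|\gamma_n|}$, we get $P_rf(e^{i\theta})=\sum_\alpha c_\alpha r^\alpha e^{i\alpha\cdot\theta}=F(re^{i\theta})$. Now take general $f\in H^p$ with $1<p<\infty$. The map $f\mapsto F(re^{i\theta})$ is continuous on $H^p$ for $re^{i\theta}\in\ell^2\cap\D^\infty$, by the point-evaluation bound in \cite[Chapter 13]{DefantBook}, and $f\mapsto P_rf(e^{i\theta})$ is also continuous. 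To see the latter, note that for fixed $\theta$ it is convolution with $\nu_r$. For $r\in\ell^2$ the measure $\nu_r$ is absolutely continuous with an $L^{p'}$ density; this is exactly the Cole--Gamelin type estimate behind the point evaluations. I would rather avoid that density question: instead I would obtain the identity in $L^p(d\theta)$ by density of polynomials, since $P_r$ is an $L^p$ contraction and $\theta\mapsto F(re^{i\theta})$ depends continuously on $f$ in $L^p$. That yields the identity for a.e.\ $\theta$, for each fixed $r$. As there are countably many $r^{(s)}$, it then holds simultaneously for all $s$ off a null set.

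For $p=\infty$ with $r\in c_0$, I would write $F(re^{i\theta})$ as the limit of the truncations $F(r_1e^{i\theta_1},\ldots,r_Ne^{i\theta_N},0,\ldots)$ as $N\to\infty$, using continuity of $F$ on $c_0\cap\D^\infty$, because the truncated points converge in $c_0$. For the finitely supported radius $r^{[N]}=(r_1,\ldots,r_N,0,\ldots)$, the truncated value equals $P_{r^{[N]}}f(e^{i\theta})$ by the finite-dimensional Poisson formula, since $f$ is in $H^2$. Finally $P_{r^{[N]}}f\to P_rf$ pointwise: the measures $\nu_{r^{[N]}}$ converge to $\nu_r$ weakly-$*$, and in fact, when $r_n<1$ for all $n$, the density ratio is controlled well enough to obtain convergence. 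This pointwise passage for a merely bounded $f$ is the step I expect to be the main obstacle. The fallback is to show $P_{r^{[N]}}f\to P_rf$ in $L^2$, since $\nu_{r^{[N]}}*\nu_{r'}=\nu_r$ with $r'=(1,\ldots,1,r_{N+1},\ldots)$ and $P_{r'}\to I$ strongly on $L^2$. Along a subsequence this gives a.e.\ convergence, which combined with the everywhere convergence of the truncations to $F(re^{i\theta})$ identifies the limit a.e.

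\textbf{Step 2 (conclusion).} With $F(r^{(s)}e^{i\theta})=P_{r^{(s)}}f(e^{i\theta})$ for all $s$ and a.e.\ $\theta$, apply Theorem~\ref{thm:coordinatewise-monotone} with $g=f\in L^q$, where $q=p$ if $p<\infty$ and $q=2$ if $p=\infty$. The hypotheses are exactly the componentwise monotonicity and $r^{(s)}_n\uparrow1$. This gives $P_{r^{(s)}}f\to f$ a.e., which proves the corollary.
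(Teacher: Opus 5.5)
Your route is the paper's: identify $F(r^{(s)}e^{i\theta})$ with $P_{r^{(s)}}f(e^{i\theta})$ almost everywhere for each $s$, discard countably many null sets, and apply Theorem~\ref{thm:coordinatewise-monotone} with exponent $p$, or $2$ when $p=\infty$. The paper only asserts the identification. Your proof of it for $1<p<\infty$ is fine: density of polynomials plus the point-evaluation bound, which is uniform in $\theta$ because $|r_ne^{i\theta_n}|=r_n$.

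In the case $p=\infty$, however, the factorization in your fallback is false. By $\nu_a*\nu_b=\nu_{ab}$ and $r^{[N]}_n=0$ for $n>N$, one has $\nu_{r^{[N]}}*\nu_{r'}=\nu_{r^{[N]}}$, not $\nu_r$. What is true is
\[
 P_{r^{[N]}}f=\mathbb E\bigl[P_rf\mid\sigma(z_1,\ldots,z_N)\bigr].
\]
To see this, compare Fourier multipliers: both sides multiply $\hat f(\gamma)$ by $\prod_{n\le N}r_n^{|\gamma_n|}$ when $\gamma$ is supported in $\{1,\ldots,N\}$, and by $0$ otherwise. Hence $P_{r^{[N]}}f\to P_rf$ in $L^2$, and also almost everywhere by the martingale convergence theorem, so you do not need a subsequence. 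Combine this with the finite-dimensional identity $F(r^{[N]}e^{i\theta})=P_{r^{[N]}}f(e^{i\theta})$ and the everywhere convergence $F(r^{[N]}e^{i\theta})\to F(re^{i\theta})$, which follows from norm continuity of $F$ on $c_0\cap\D^\infty$. This gives the identification $F(re^{i\theta})=P_rf(e^{i\theta})$ almost everywhere.

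You should also drop your first idea based on weak-$*$ convergence and density ratios. For $r\in c_0\setminus\ell^2$, the measure $\nu_r$ is singular with respect to $m_\infty$ by Kakutani's theorem, so there is no density to control. The martingale route is the one to use.
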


\begin{proof}
For each $s$, the canonical holomorphic extension agrees almost everywhere with the product Poisson extension: $F\bigl(r^{(s)}e^{i\theta}\bigr)=P_{r^{(s)}}f(e^{i\theta})$.
Since $s$ ranges over a countable set, these identities hold simultaneously outside a single null set. If $1<p<\infty$, the conclusion follows directly from Theorem~\ref{thm:coordinatewise-monotone}. If $p=\infty$, then $f\in L^2(\T^\infty)$, so the same theorem applied with $p=2$ gives the result.
\end{proof}

\vspace{1cm}


\section*{Appendix: Geometric illustrations of the selected radii}\label{app:figures}

The following figures illustrate the geometry of the construction in Section~2. They are included only for visualization and are not used in the proofs.

\begin{figure}[!htbp]
\centering
\includegraphics[width=0.72\textwidth]{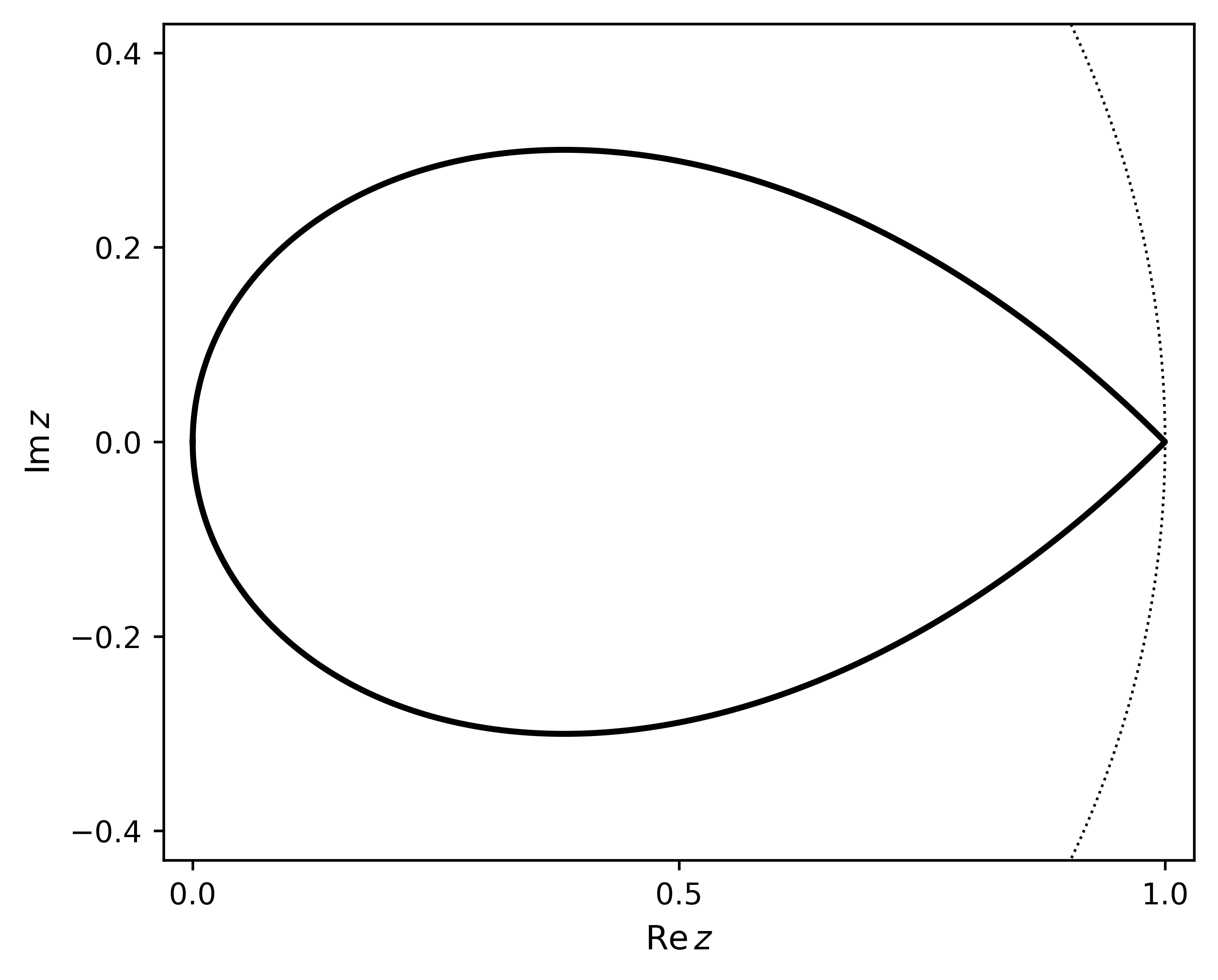}
\caption{The maximizing radii for the one-variable Poisson kernel $a\mapsto P_a(e^{i\varphi})$. The locus $z=a^*(\varphi)e^{i\varphi}$ is leaf-shaped inside the unit disc.}
\label{fig:poisson-kernel-leaf}
\end{figure}

\begin{figure}[!htbp]
\centering
\includegraphics[width=0.72\textwidth]{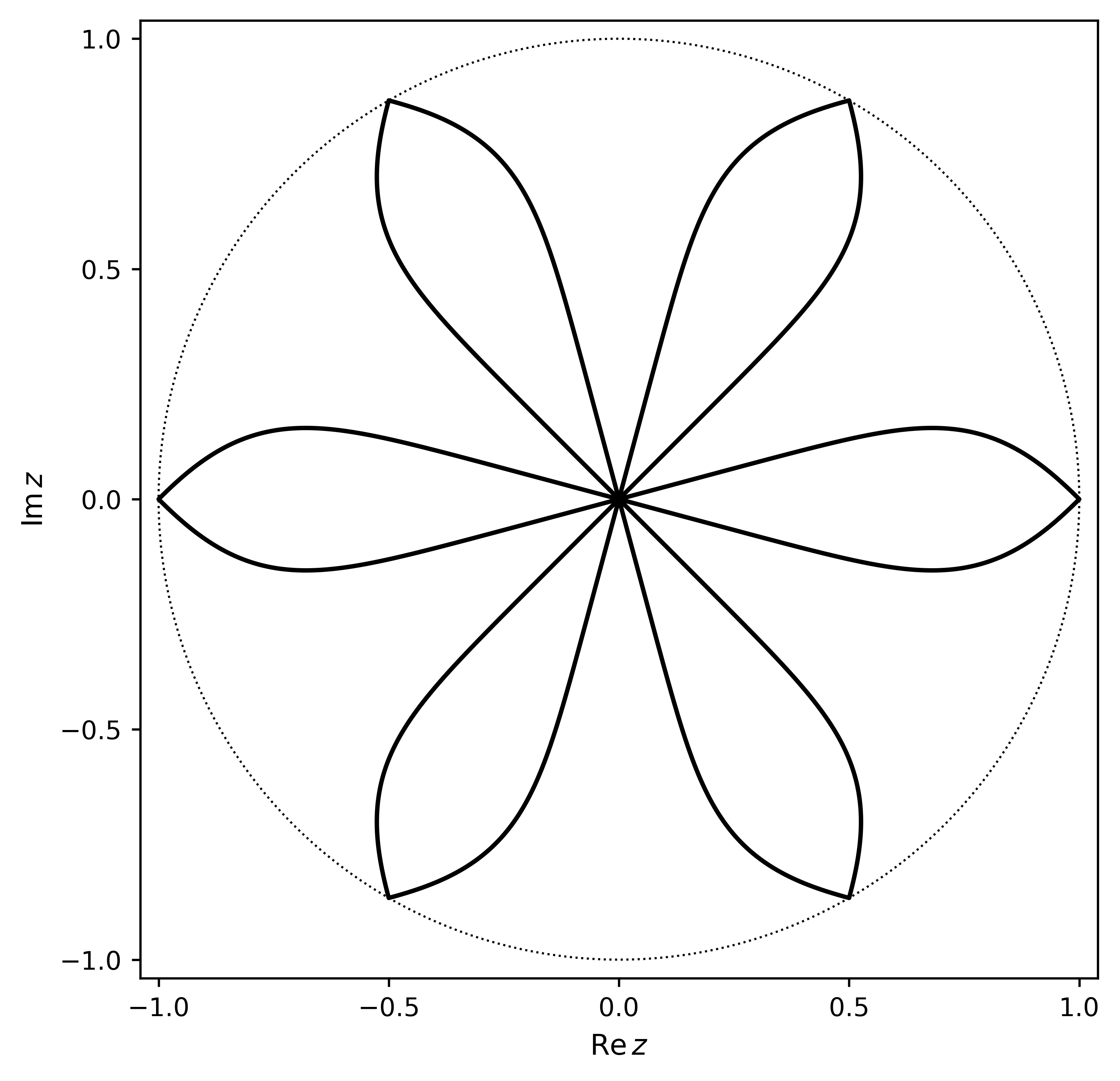}
\caption{A representative petal configuration for the maximizing radii associated with $u_{k,j}(z)=\eta_kP_{1-\eta_k}(z^{q_{k,j}})$. The displayed example uses $q_{k,j}=6$.}
\label{fig:ukj-petals}
\end{figure}

\begin{figure}[!htbp]
\centering
\includegraphics[width=\textwidth]{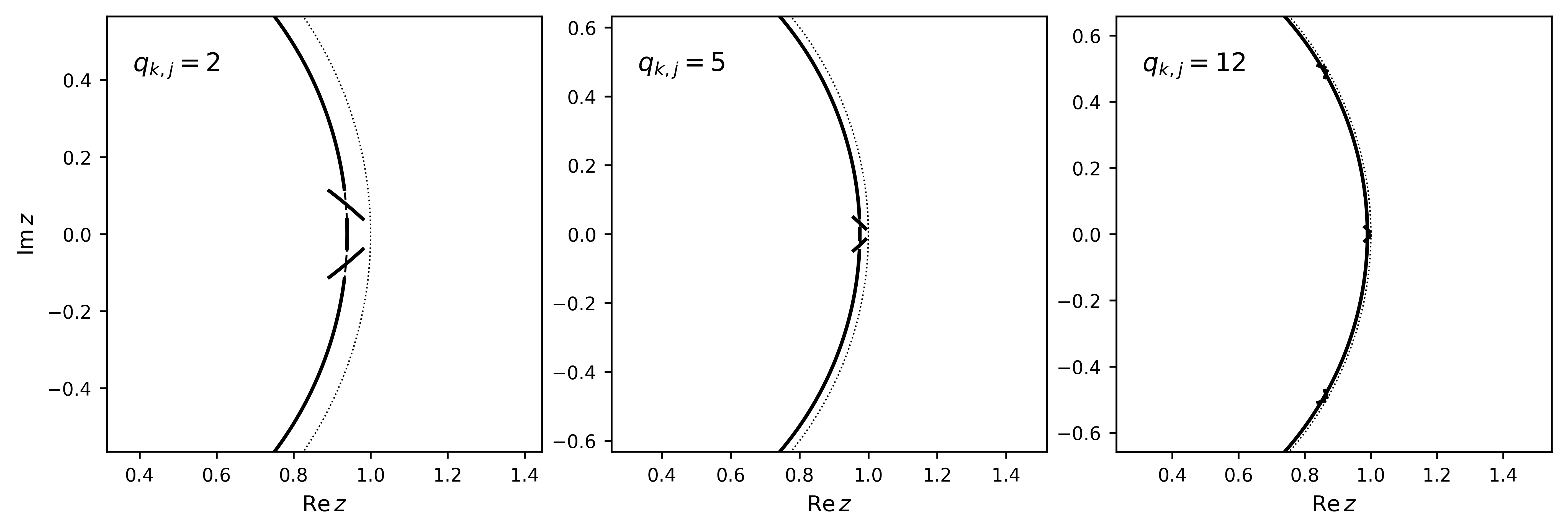}
\caption{The piecewise-defined radii $r_{k,j}(t)$ for the representative values $q_{k,j}=2,5,12$. The dashed circle is the constant radius $(\alpha_k\beta_k)^{1/(2q_{k,j})}$; the chosen radius differs from it only when $2\eta_k<\varphi_{k,j}(t)<1/4$.}
\label{fig:chosen-radii}
\end{figure}



\vspace{1cm}

\section*{Author Contribution Statement}
All authors contributed equally to the research, analysis, and preparation of the manuscript. Each author approved the final version of the paper and agrees to be accountable for all aspects of the work.

\section*{Conflict of interest}
The authors declare no conflicts of interest.

\section*{Data availability statement}
No data, models, or code were generated or used for the research described in the
article.

\section*{Acknowledgment}

Yufeng Lu was supported by the National Natural Science Foundation
of China (Grant No. 12031002). Chao Zu was supported by the National
Natural Science Foundation of China (Grant No. 12401151).

\end{document}